\newtheorem{thm}{Theorem}[section]
\theoremstyle{definition}
\newtheorem{cor}[thm]{Corollary}
\newtheorem{prop}[thm]{Proposition}
\newtheorem{defn}[thm]{Definition}
\newtheorem{defns}[thm]{Definitions}
\newtheorem{lem}[thm]{Lemma}
\newtheorem{ex}[thm]{Example}
\numberwithin{equation}{section}
\begin{document}

\title[Fully $S$-coidempotent modules]{Fully $S$-coidempotent modules}

\author{F. Farshadifar*}
\address{\llap{*\,} (Corresponding Author) Department of Mathematics, Farhangian University, Tehran, Iran.}
\email{f.farshadifar@cfu.ac.ir}

\author%
{H. Ansari-Toroghy**}

\newcommand{\acr}{\newline\indent}

\address{\llap{**\,}Department of pure Mathematics\\
Faculty of mathematical
Sciences\\
University of Guilan\\
P. O. Box 41335-19141, Rasht, Iran}
\email{ansari@guilan.ac.ir}

\begin{abstract}
Let $R$ be a commutative ring with identity, $S$ be a multiplicatively closed subset of $R$, and $M$ be an $R$-module.
A submodule $N$ of $M$ is called \emph{coidempotent} if $N=((0:_MAnn^2_R(N))$.  Also, $M$ is called \emph{fully coidempotent} if every submodule of $M$ is coidempotent.
In this article, we introduce the concepts of $S$-coidempotent submodules and fully $S$-coidempotent $R$-modules as generalizations of coidempotent submodules and fully coidempotent $R$-modules. We explore some basic properties of these classes of $R$-modules.
\end{abstract}

\subjclass[2010]{13C13, 13A15}%
\keywords {Coidempotent submodule, fully coidempotent module, multiplicatively closed subset, $S$-coidempotent submodule, fully $S$-coidempotent module}

\maketitle
% ----------------------------------------------------------------
%------------------------------------------------------------------------------------%

\section{\bf Introduction}
\vskip 0.4 true cm
Throughout this paper $R$ will denote a commutative ring with
identity and $S$ will denote a multiplicatively closed subset of $R$. Also, $\Bbb Z$ will denote the ring of integers.

\begin{defns}\label{d1.1}
Let $M$ be an $R$-module.
\begin{itemize}
\item [(a)] $M$ is said to be a \emph{multiplication module} if for every submodule $N$ of $M$, there exists an ideal $I$ of $R$ such that $N=IM$ \cite{Ba81}.
\item [(b)]  $M$ is said to be a \emph{comultiplication module} if for every submodule $N$ of $M$, there exists an ideal $I$ of $R$ such that $N=(0:_MI)$. It is easy to see that $M$ is a comultiplication module if and only if $N=(0:_MAnn_R(N))$ for each submodule $N$ of $M$ \cite{AF07}.
\item [(c)] A submodule $N$ of $M$ is said to be \emph {pure} if $IN=N \cap IM$ for every ideal $I$ of $R$ \cite{AF74}.
$M$ is said to be \emph{fully pure} if every submodule of $M$ is pure \cite{AF122}.
\item [(d)] A submodule $N$ of $M$ is said to be \emph{copure} if $(N:_MI)=N+(0:_MI)$ for every ideal $I$ of $R$ \cite{AF09}.
$M$ is said to be \emph{fully copure} if every submodule of $M$ is copure \cite{AF122}.
\item [(e)]  A submodule $N$ of $M$ is said to be \emph{idempotent} if $N=(N:_RM)^2M$.  Also, $M$ is said to be \emph{fully idempotent} if every submodule of $M$ is idempotent \cite{AF122}.
\item [(f)] A submodule $N$ of $M$ is said to be \emph{coidempotent} if $N=((0:_MAnn^2_R(N))$.  Also, $M$ is said to be \emph{fully coidempotent} if every submodule of $M$ is coidempotent \cite{AF122}.
\end{itemize}
\end{defns}
Recently the notions such as of $S$-Noetherian rings, $S$-Noetherian modules, $S$-prime submodules, $S$-multiplication modules, $S$-2-absorbing submodules, $S$-second submodules, $S$-comultiplication modules, classical $S$-2-absorbing submodules,  $S$-pure submodules, $S$-copure submodules, fully $S$-idempotent module, etc.  introduced and investigated \cite{AD02, AS16, BRT18, satk19, ATUS2020, uatk20,  FF22, EUS2020, Na2020, FF2022, FF2023, FF2024}.

\begin{defns}\label{d1.2}
Let $M$ be an $R$-module.
\begin{itemize}
\item [(a)] A multiplicatively closed subset $S$ of $R$ is said to satisfy the \textit{maximal multiple condition} if there
exists an $s \in S$ such that $t\mid s$ for each $t \in S$. For example, if $S$ is finite or $ S \subseteq U(R)$, then $S$ satisfying the maximal multiple condition \cite{ATUS2020}.
\item [(b)] A submodule $N$ of $M$ is said to be an \textit{$S$-finite submodule} if there exist a finitely generated submodule
$K$ of $M$ and $s \in S$ such that $sN \subseteq K \subseteq N$. Also, $M$ is said to be an \textit{$S$-Noetherian module} if every submodule of $M$ is $S$-finite. In particular, $R$ is said to be an $S$-Neotherian ring
if it is an $S$-Noetherian $R$-module \cite{AD02}.
\item [(c)] A submodule $N$ of $M$ is said to be an \textit{$S$-direct summand} of $M$ if there exist a submodule $K$ of $M$ and $s\in S$ such that $sM=N+K$ (d.s.). $M$ is said to be an \textit{$S$-semisimple module} if every submodule of $M$ is an $S$-direct summand of $M$ \cite{FF2023}.
\item [(d)] $M$ is said to be an \emph{$S$-comultiplication module} if for each submodule $N$ of $M$, there exist an $s \in S$ and an ideal $I$ of $R$ such that $s(0:_MI) \subseteq N \subseteq (0:_MI)$ \cite{EUS2020}.
\item [(e)] A submodule $N$ of $M$ is said to be \emph {$S$-pure} if there exists an $s \in S$ such that $s(N \cap IM) \subseteq IN$ for every ideal $I$ of $R$. Also, $M$ is said to be \emph{fully $S$-pure} if every submodule of $M$ is $S$-pure \cite{FF2022}.
\item [(f)] A submodule $N$ of $M$ is said to be \emph {$S$-copure} if there exists an $s \in S$ such that $s(N:_MI)\subseteq N+(0:_MI)$ for every ideal $I$ of $R$. Also, $M$  is said to be \emph{fully $S$-copure} if every submodule of $M$ is $S$-copure \cite{FF2023}.
\item [(g)] A submodule $N$ of $M$ is said to be an \emph{$S$-idempotent submodule} if there exists an $s \in S$ such that $sN\subseteq  (N:_RM)^2M \subseteq N$. Also, $M$ is said to a \emph{fully $S$-idempotent module} if every submodule of $M$ is an $S$-idempotent submodule \cite{FF2024}.
\end{itemize}
\end{defns}

In this paper, we introduce the notions of $S$-coidempotent submodules and fully $S$-coidempotent modules as a generalization of coidempotent submodules and fully coidempotent modules.
Also, these notions can be regarded as a dual notions of $S$-idempotent submodules and fully $S$-idempotent modules. We consider various fundamental properties of fully $S$-coidempotent $R$-modules.
%------------------------------------------------------------------------------------%
%------------------------------------------------------------------------------------%
%------------------------------------------------------------------------------------%
\section{\bf Main results}
\begin{defn}\label{d2.1}
We say that a submodule $N$ of an $R$-module $M$ is an \emph{$S$-coidempotent submodule} if there exists an $s \in S$ such that $s(0:_MAnn^2_R(N)) \subseteq N$.
\end{defn}

\begin{defn}\label{d2.2}
We say that an $R$-module $M$ is a \emph{fully $S$-coidempotent module} if every submodule of $M$ is an $S$-coidempotent submodule.
\end{defn}

\begin{ex}\label{e2.6}
 Let $M$ be an $R$-module with $Ann_R(M) \cap S\not=\emptyset$. Then clearly, $M$ is a fully $S$-coidempotent $R$-module.
\end{ex}

\begin{prop}\label{p2.3}
Every fully coidempotent $R$-module is a fully $S$-coidempotent $R$-module. The converse is true if $S \subseteq U(R)$, where $U(R)$ is the set of units in $R$.
\end{prop}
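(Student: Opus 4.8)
The plan is to route both implications through the standing inclusion $N \subseteq (0:_MAnn^2_R(N))$, which I claim holds for every submodule $N$ of every $R$-module $M$. Indeed, $Ann^2_R(N) = (Ann_R(N))^2 \subseteq Ann_R(N)$ because $Ann_R(N)$ is an ideal, and taking annihilators in $M$ reverses this to $(0:_MAnn_R(N)) \subseteq (0:_MAnn^2_R(N))$; since $Ann_R(N)$ annihilates $N$ we have $N \subseteq (0:_MAnn_R(N))$, and chaining the two gives the claim. The upshot is that a submodule $N$ is coidempotent exactly when the reverse inclusion $(0:_MAnn^2_R(N)) \subseteq N$ holds, and $S$-coidempotence is precisely the weakening of this to $s(0:_MAnn^2_R(N)) \subseteq N$ for some $s \in S$.

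For the first assertion, I would assume $M$ is fully coidempotent and fix an arbitrary submodule $N$, so that $N = (0:_MAnn^2_R(N))$. Then for any $s \in S$ we have $s(0:_MAnn^2_R(N)) = sN \subseteq N$, since $N$ is closed under multiplication by $s$; hence $N$ is $S$-coidempotent, and as $N$ was arbitrary, $M$ is fully $S$-coidempotent.

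For the converse, I would assume $M$ is fully $S$-coidempotent with $S \subseteq U(R)$ and again fix a submodule $N$. By hypothesis there is $s \in S$ with $s(0:_MAnn^2_R(N)) \subseteq N$, and since $s$ is a unit, multiplying through by $s^{-1} \in R$ (and using that $N$ is a submodule) yields $(0:_MAnn^2_R(N)) \subseteq N$. Combined with the standing inclusion this gives $N = (0:_MAnn^2_R(N))$, so $N$ is coidempotent and $M$ is fully coidempotent. The proof carries no real obstacle: the only point needing care is recognizing that one half of the coidempotence equality is automatic, after which the forward direction is immediate and the converse rests entirely on the invertibility of $s$, which is exactly what $S \subseteq U(R)$ guarantees and exactly what would prevent the converse from holding in general.
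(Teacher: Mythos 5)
Your proof is correct: the paper itself dismisses this proposition with ``This is clear,'' and your argument is exactly the routine unwinding of definitions that the authors had in mind. The standing inclusion $N \subseteq (0:_M\mathrm{Ann}^2_R(N))$, the observation that $sN \subseteq N$ handles the forward direction, and the use of $s^{-1}$ for the converse are precisely the omitted details, all carried out correctly.
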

\begin{proof}
This is clear.
\end{proof}

The following examples show that the converse of Proposition \ref{p2.3} is not true in general.
\begin{ex}\label{e2.4}
Consider the $\Bbb Z$-module $\Bbb Z$. Then for each positive integer $t$, $(0:_{\Bbb Z}Ann_{\Bbb Z}(t\Bbb Z))= \Bbb Z$ implies that each submodule of $\Bbb Z$ is not coidempotent and so $\Bbb Z$ is not a fully coidempotent $\Bbb Z$-module.
Now,
take the multiplicatively closed subset $S = \Bbb Z \setminus \{0\}$ of $\Bbb Z$. Then  for each positive integer $t$, $t(0:_{\Bbb Z}Ann_{\Bbb Z}(t\Bbb Z)^2)=t \Bbb Z$. Thus each submodule of $\Bbb Z$ is $S$-coidempotent and hence $\Bbb Z$ is a fully $S$-coidempotent $\Bbb Z$-module.
\end{ex}

\begin{ex}\label{e3.2}
Consider the $\Bbb Z$-module $M=\Bbb Z_2\oplus \Bbb Z_2$. Take the multiplicatively closed subset $S = \{2^n: n \in \Bbb N \cup \{0\}\}$ of $\Bbb Z$, where $\Bbb N$ denotes the set of positive integers. Then $M$ as a $\Bbb Z$-module
is fully $S$-coidempotent, while $M$ is not
fully coidempotent.
\end{ex}

Lemma \ref{l2.5} and Example \ref{e2.7} show that the notion of $S$-comultiplication $R$-module is a generalization of  fully $S$-coidempotent $R$-module.
\begin{lem}\label{l2.5}
Let $M$ be a fully $S$-coidempotent $R$-module. Then $M$ is an $S$-comultiplication $R$-module.
\end{lem}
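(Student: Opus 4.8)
The plan is to verify the $S$-comultiplication condition directly for an arbitrary submodule $N$ of $M$ by making the natural choice of ideal $I = Ann^2_R(N)$. With this choice, the definition of an $S$-comultiplication module (Definition \ref{d1.2}(d)) requires producing an $s \in S$ for which
\[
s(0:_MAnn^2_R(N)) \subseteq N \subseteq (0:_MAnn^2_R(N)).
\]
The left-hand inclusion is exactly the statement that $N$ is an $S$-coidempotent submodule, which is guaranteed by the hypothesis that $M$ is fully $S$-coidempotent; this hands us the required element $s$ for free.

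The remaining task is the right-hand inclusion $N \subseteq (0:_MAnn^2_R(N))$, and I expect this to hold for \emph{every} submodule $N$, independently of any $S$-condition. Indeed, by definition of the annihilator one has $Ann_R(N)N = 0$, so for any $n \in N$ and any product $ab$ with $a,b \in Ann_R(N)$ we get $abn = a(bn) = 0$; hence $Ann^2_R(N)n = 0$, i.e.\ $n \in (0:_MAnn^2_R(N))$. Combining the two inclusions yields the desired two-sided sandwich for $N$, and since $N$ was an arbitrary submodule, $M$ is an $S$-comultiplication $R$-module.

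Because the right-hand inclusion is a purely formal consequence of $Ann_R(N)N = 0$ and the left-hand inclusion is furnished verbatim by the hypothesis, there is no genuine obstacle here; the only point that needs a line of justification is the automatic containment $N \subseteq (0:_MAnn^2_R(N))$. The key observation is simply that taking $I$ to be the square of the annihilator of $N$ is precisely what converts the $S$-coidempotent inequality into the sandwich demanded by the definition of an $S$-comultiplication module.
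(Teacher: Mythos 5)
Your proof is correct, and it is a slightly more economical route than the paper's. You take the witnessing ideal to be $I=Ann^2_R(N)$, so that the left inclusion $s(0:_MAnn^2_R(N))\subseteq N$ is verbatim the $S$-coidempotent hypothesis, and the right inclusion $N\subseteq (0:_MAnn^2_R(N))$ is the formal consequence of $Ann_R(N)N=0$ that you spell out. The paper instead takes $I=Ann_R(N)$ and derives the nontrivial inclusion from the hypothesis via the chain
$$
s(0:_MAnn_R(N))\subseteq s(N:_MAnn_R(N))\subseteq s(0:_MAnn^2_R(N))\subseteq N,
$$
where the middle step holds because $Ann_R(N)m\subseteq N$ forces $Ann^2_R(N)m\subseteq Ann_R(N)N=0$; the trivial containment $N\subseteq(0:_MAnn_R(N))$ is left implicit there. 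The two arguments differ only in this choice of ideal: yours minimizes the work, since the definition of $S$-coidempotence is tailor-made for $Ann^2_R(N)$, while the paper's choice yields the marginally stronger fact that the annihilator itself witnesses the $S$-comultiplication property, in line with the classical characterization of comultiplication modules via $N=(0:_MAnn_R(N))$ in Definitions~\ref{d1.1}(b). Both arguments are complete.
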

\begin{proof}
Let $N$ be a submodule of $M$. There exists an $s \in S$ such that
$s(0:_MAnn^2_R(N)) \subseteq N$. This implies that
$$
 s(0:_MAnn_R(N)) \subseteq s(N:_MAnn_R(N)) \subseteq s(0:_MAnn^2_R(N))\subseteq N.
$$
\end{proof}

The following example shows that the converse of Lemma \ref{l2.5}
is not true in general.

\begin{ex}\label{e2.7}
Take the multiplicatively closed subset $S=\Bbb Z \setminus 2\Bbb Z$ of $\Bbb Z$. Then
 $\Bbb Z_{4}$ is an $S$-comultiplication
  $\Bbb Z$-module. But $\Bbb Z_{4}$ is not a fully $S$-coidempotent $\Bbb Z$-module.
\end{ex}

A proper submodule $N$ of an $R$-module $M$ is said to be \emph{completely irreducible} if $N=\bigcap _
{i \in I}N_i$, where $ \{ N_i \}_{i \in I}$ is a family of
submodules of $M$, implies that $N=N_i$ for some $i \in I$. It is
easy to see that every submodule of $M$ is an intersection of
completely irreducible submodules of $M$ \cite{FHo06}.

In the following theorem, we characterize the fully $S$-coidempotent $R$-modules, where $S$ satisfying the maximal multiple condition.
\begin{thm}\label{t2.11}
Let $S$ satisfying the maximal multiple condition and let $M$ be an $R$-module. Then the following statements are equivalent:
\begin{itemize}
 \item [(a)] $M$ is a fully $S$-coidempotent module;
 \item [(b)] Every completely irreducible submodule of $M$ is $S$-coidempotent;
 \item [(c)] For all submodules $N$ and $K$ of $M$, we have
               $s(0:_MAnn_R(N)Ann_R(K))\subseteq N+K$ for some $s \in S$.
\end{itemize}
 \end{thm}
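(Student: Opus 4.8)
The plan is to establish the cycle of implications (a) $\Rightarrow$ (b) $\Rightarrow$ (c) $\Rightarrow$ (a). The implication (a) $\Rightarrow$ (b) is immediate, since a completely irreducible submodule is in particular a submodule, and by hypothesis every submodule of $M$ is $S$-coidempotent. The implication (c) $\Rightarrow$ (a) is equally quick: given an arbitrary submodule $N$, apply (c) with $K=N$. Since $Ann_R(N)Ann_R(N)=Ann^2_R(N)$ and $N+N=N$, condition (c) yields an $s\in S$ with $s(0:_MAnn^2_R(N))\subseteq N$, which is exactly the statement that $N$ is $S$-coidempotent. As $N$ was arbitrary, $M$ is fully $S$-coidempotent.

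The substantive step is (b) $\Rightarrow$ (c), and this is where I would use both the completely irreducible decomposition and the maximal multiple condition. Fix submodules $N$ and $K$, and write $N+K=\bigcap_{i\in I}L_i$ as an intersection of completely irreducible submodules, which is possible by the remark preceding the theorem. For each $i$, hypothesis (b) provides $s_i\in S$ with $s_i(0:_MAnn^2_R(L_i))\subseteq L_i$. The key algebraic observation is that $N+K\subseteq L_i$ forces $Ann_R(L_i)\subseteq Ann_R(N+K)=Ann_R(N)\cap Ann_R(K)$, so in particular $Ann_R(L_i)\subseteq Ann_R(N)$ and $Ann_R(L_i)\subseteq Ann_R(K)$; multiplying these gives $Ann^2_R(L_i)\subseteq Ann_R(N)Ann_R(K)$. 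Since the operation $(0:_M-)$ reverses inclusions of ideals, this yields $(0:_MAnn_R(N)Ann_R(K))\subseteq (0:_MAnn^2_R(L_i))$, and therefore $s_i(0:_MAnn_R(N)Ann_R(K))\subseteq L_i$ for every $i$.

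It remains to replace the family $\{s_i\}$ by a single $s\in S$ that works simultaneously. Here I would invoke the maximal multiple condition: choose $s\in S$ with $s_i\mid s$ for all $i$, say $s=s_ir_i$ with $r_i\in R$. Then for each $i$,
$$
s(0:_MAnn_R(N)Ann_R(K)) = r_is_i(0:_MAnn_R(N)Ann_R(K)) \subseteq r_iL_i \subseteq L_i,
$$
using that $L_i$ is a submodule. Intersecting over $i$ gives $s(0:_MAnn_R(N)Ann_R(K))\subseteq \bigcap_{i\in I}L_i=N+K$, which is precisely (c).

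The main obstacle is the passage from the individual witnesses $s_i$ to a uniform $s$: without the maximal multiple condition one cannot guarantee a single scalar handling the entire (possibly infinite) family of completely irreducible submodules, and it is exactly this step that forces the hypothesis on $S$. It is worth noting that the equivalence (a) $\Leftrightarrow$ (c) in fact does not require the maximal multiple condition, since (a) $\Rightarrow$ (c) can be obtained directly by applying $S$-coidempotence to the single submodule $N+K$ together with the inclusion $Ann^2_R(N+K)\subseteq Ann_R(N)Ann_R(K)$; the condition on $S$ enters only through the completely irreducible reduction used to prove (b) $\Rightarrow$ (c).
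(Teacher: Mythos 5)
Your proof is correct, and it rests on the same two pillars as the paper's proof---the decomposition of a submodule into completely irreducible submodules and the maximal multiple condition---but it routes the logic differently. The paper makes (a) the hub: it proves $(b)\Rightarrow(a)$ by first using the maximal multiple condition to fix one $s\in S$ with $s(0:_MAnn^2_R(L))\subseteq L$ for \emph{every} completely irreducible submodule $L$, then decomposing an arbitrary submodule $N=\bigcap_{i\in I}L_i$ and intersecting; the equivalence $(a)\Leftrightarrow(c)$ is proved separately, by exactly the direct computations you give. You instead close the cycle $(a)\Rightarrow(b)\Rightarrow(c)\Rightarrow(a)$, so the completely irreducible decomposition is applied to $N+K$ and the hard step lands on (c) rather than on (a); this needs your (correct) extra observation $Ann^2_R(L_i)\subseteq Ann_R(N)Ann_R(K)$, where the paper's version only needs $Ann^2_R(L_i)\subseteq Ann^2_R(N)$. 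The two uses of the maximal multiple condition are mirror images: the paper fixes the uniform $s$ before decomposing, while you collect witnesses $s_i$ and pass to a common multiple afterwards---both exploit $s=s_ir_i$ and $r_iL_i\subseteq L_i$. Your organization is slightly leaner (three implications instead of four) and, as you observe, it isolates the role of the hypothesis on $S$: the equivalence $(a)\Leftrightarrow(c)$ holds for arbitrary $S$, a fact implicit but unremarked in the paper. An incidental advantage of your route: the paper's printed chain for $(b)\Rightarrow(a)$ drops the square on the annihilator, so as written it concludes only $s(0:_MAnn_R(N))\subseteq N$ (the $S$-comultiplication property) rather than the required $s(0:_MAnn^2_R(N))\subseteq N$; the intended fix is to carry $Ann^2_R$ through the chain, and your version of the argument avoids this slip entirely.
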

\begin{proof}
$(a) \Rightarrow (b)$ This is clear.

$(b) \Rightarrow (a)$
As $S$ satisfying the maximal multiple condition, there exists an $s \in S$ such that for each completely irreducible submodule $L$ of $M$ we have
 $s(0:_MAnn^2_R(L))\subseteq L$.
 Now, let $N$ be a submodule of $M$ and $N=\bigcap_{i \in I}L_i$, where each $L_i$ is a completely irreducible submodule of $M$.
Then we have
\begin{align}
\nonumber
s(0:_MAnn_R(N))&=s(0:_MAnn_R(\bigcap_{i \in I}L_i))
\\ \nonumber
&\subseteq \bigcap_{i \in I}s(0:_MAnn_R(L_i))
\\ \nonumber
&\subseteq \bigcap_{i \in I} L_i=N.
\end{align}

$(a) \Rightarrow (c)$  Let $N$ and $K$ be two submodules of $M$.
Then there exists an $s \in S$ such that $s(0:_MAnn^2_R(N+K)) \subseteq N+K$. Hence we have
$$
s(0:_MAnn_R(N)Ann_R(K))\subseteq s(0:_MAnn^2_R(N+K)) \subseteq N+K.
$$

$(c) \Rightarrow (a)$  For a submodule $N$ of $M$ for some $s \in S$,
we have
$$
s(0:_MAnn^2_R(N))=s(0:_MAnn_R(N)Ann_R(N))\subseteq N+N=N.
$$
Thus $M$ is a fully $S$-coidempotent module
\end{proof}

\begin{thm}\label{t2.7}
Let $S$ satisfying the maximal multiple condition and $M$ be an $R$-module. Then we have the following.
\begin{itemize}
\item [(a)]  If $M$ is an $S$-comultiplication module such
that every completely irreducible submodule of $M$ is an $S$-direct
summand of $M$, then $M$ is a fully $S$-coidempotent module.
\item [(b)]  If $M$ is an $S$-semisimple $S$-comultiplication
module, then $M$ is a fully $S$-coidempotent module.
\end{itemize}
\end{thm}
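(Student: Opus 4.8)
The plan is to reduce both parts to the criterion of Theorem~\ref{t2.11}: it suffices to show that every completely irreducible submodule of $M$ is $S$-coidempotent. Since in an $S$-semisimple module \emph{every} submodule, and in particular every completely irreducible one, is an $S$-direct summand, part~(b) is precisely the special case of part~(a) in which the hypothesis on completely irreducible submodules holds automatically. So the real content lies in part~(a).

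First I would record a reformulation of the $S$-comultiplication hypothesis: for every submodule $N$ there is an $s \in S$ with $s(0:_M Ann_R(N)) \subseteq N \subseteq (0:_M Ann_R(N))$. The inclusion $N \subseteq (0:_M Ann_R(N))$ is automatic; for the other one, note that the ideal $I$ supplied by the $S$-comultiplication condition satisfies $I \subseteq Ann_R(N)$ (because $N \subseteq (0:_M I)$), hence $(0:_M Ann_R(N)) \subseteq (0:_M I)$ and therefore $s(0:_M Ann_R(N)) \subseteq s(0:_M I) \subseteq N$.

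Now fix a completely irreducible submodule $L$ and set $J = Ann_R(L)$. Choose $s_2 \in S$ with $s_2(0:_M J) \subseteq L$, and use the $S$-direct summand hypothesis to obtain $s_1 \in S$ and a submodule $K$ with $s_1 M = L + K$ and $L \cap K = 0$. For $x \in (0:_M J^2)$ I would write $s_1 x = \ell + k$ with $\ell \in L$ and $k \in K$. Since $J\ell = 0$, multiplying by $J^2$ gives $J^2 k = J^2 s_1 x = s_1 J^2 x = 0$. The key step is then to push this annihilation down to a single scalar: for $a \in J$ the element $ax$ lies in $(0:_M J)$, so $s_2(ax) \in L$; combined with $ak = s_1(ax)$ this forces $s_2 a k \in L \cap K = 0$. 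Hence $J(s_2 k) = 0$, i.e.\ $s_2 k \in (0:_M J)$, and a second application of $s_2$ gives $s_2^2 k \in L \cap K = 0$. Therefore $s_1 s_2^2\, x = s_2^2 \ell \in L$, so $L$ is $S$-coidempotent with witness $s_1 s_2^2$.

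I expect the main obstacle to be exactly this final transfer — improving $J^2 k = 0$ to $s_2^2 k = 0$ — since it is the only place where the $S$-direct summand decomposition and the reformulated $S$-comultiplication property genuinely interact, the collapse of the $K$-component resting on the repeated use of $L \cap K = 0$. Once every completely irreducible submodule of $M$ is known to be $S$-coidempotent, Theorem~\ref{t2.11}\,$((b)\Rightarrow(a))$ yields part~(a), and part~(b) follows as observed.
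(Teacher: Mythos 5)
Your argument is correct, and in outline it follows the paper: both proofs reduce part (a) to showing that each completely irreducible submodule $L$ is $S$-coidempotent via Theorem \ref{t2.11}, both then invoke the decomposition $s_1M=L\oplus K$, and both obtain (b) as the automatic special case of (a). But at the decisive step the two genuinely differ. The paper finishes in one line,
\[
s(0:_MAnn^2_R(L)) \subseteq (0:_{sM}Ann^2_R(L))=(0:_LAnn^2_R(L))+(0:_KAnn^2_R(L))=L,
\]
where the last equality tacitly requires $(0:_KAnn^2_R(L))\subseteq L\cap K=0$; this is never justified, and indeed the paper's computation nowhere uses the $S$-comultiplication hypothesis, without which the claim is false (take $L=\Bbb Z_2\oplus 0$ and $K=0\oplus\Bbb Z_2$ inside the $\Bbb Z$-module $\Bbb Z_2\oplus \Bbb Z_2$ with $S=\{1\}$: then $(0:_KAnn^2_R(L))=K\neq 0$). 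Your element-level argument supplies exactly the missing justification, and you correctly identified this transfer as the crux: from the reformulated $S$-comultiplication property $s_2(0:_MAnn_R(L))\subseteq L$ you show that the $K$-component $k$ of $s_1x$ satisfies first $s_2k\in(0:_MAnn_R(L))$ and then $s_2^2k\in L\cap K=0$, so your witness is $s_1s_2^2$ rather than the paper's $s$. In effect your proof is the paper's sketch made rigorous --- it is the only one of the two that actually uses the comultiplication hypothesis, at the mild cost of a larger multiplier --- whereas the paper's version, read literally, asserts $(0:_KAnn^2_R(L))=0$ where only $s_2^2(0:_KAnn^2_R(L))=0$ is available.
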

\begin{proof}
(a) By Theorem \ref{t2.11}, it is enough to show that every completely
irreducible submodule of $M$ is $S$-coidempotent. So, let $L$ be a
completely irreducible submodule of $M$. By hypothesis, there exists an $s \in S$ such that $sM=L+K$
(d.s.), where $K$ is a submodule of $M$. Thus
$$
s(0:_MAnn^2_R(L)) \subseteq (0:_{sM}Ann^2_R(L))=(0:_LAnn^2_R(L))+(0:_KAnn^2_R(L))=L.
$$

(b) This follows from part (a).
\end{proof}

The saturation $S^*$ of $S$ is defined as $S^*=\{x \in R : x/1\  is \ a\ unit  \ of\ S^{-1}R \}$. It is obvious that $S^*$ is a multiplicatively closed subset of $R$ containing $S$ \cite{Gr92}.
\begin{prop}\label{p2.8}
Let $M$ be an R-module. Then we have the following.
\begin{itemize}
\item [(a)] If $S_1 \subseteq S_2$ are multiplicatively closed subsets of $R$ and $M$ is a fully $S_1$-coidempotent $R$-module, then $M$ is a fully $S_2$-coidempotent $R$-module.
\item [(b)] $M$ is a fully $S$-coidempotent $R$-module if and only if $M$ is a fully $S^*$-coidempotent $R$-module.
\item [(c)] If $M$ is a fully $S$-coidempotent $R$-module, then every homomorphic image of $M$ is a fully $S$-coidempotent  $R$-module.
\end{itemize}
\end{prop}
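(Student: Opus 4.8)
The three parts increase in difficulty, with (a) and (b) reducing to bookkeeping about the witnessing element $s$ and (c) carrying all of the real content. For (a) I would take an arbitrary submodule $N$ of $M$; full $S_1$-coidempotence supplies an $s \in S_1$ with $s(0:_MAnn^2_R(N)) \subseteq N$, and since $S_1 \subseteq S_2$ the very same $s$ lies in $S_2$, so $N$ is $S_2$-coidempotent. As $N$ was arbitrary, $M$ is fully $S_2$-coidempotent. For (b) the forward implication is immediate from (a) together with the containment $S \subseteq S^*$ recorded just before the statement. The substance is the converse: given a submodule $N$ and a witness $x \in S^*$ with $x(0:_MAnn^2_R(N)) \subseteq N$, I would unwind the definition of $S^*$. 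Since $x/1$ is a unit of $S^{-1}R$, there are $y \in R$ and $s \in S$ with $(x/1)(y/s)=1/1$, hence some $u \in S$ with $uxy=us$; setting $s':=us=(uy)x \in S$ exhibits an element of $S$ that is an $R$-multiple of $x$. Multiplying the inclusion $x(0:_MAnn^2_R(N)) \subseteq N$ by $uy$ then gives $s'(0:_MAnn^2_R(N)) \subseteq (uy)N \subseteq N$, so $N$ is $S$-coidempotent and $M$ is fully $S$-coidempotent.

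For (c) I would first reduce a general homomorphic image to the canonical quotient: any surjection is, up to isomorphism, the projection $\pi\colon M \to M/K$ with $K=\ker\pi$, and every submodule of $M/K$ has the form $N/K$ with $K \subseteq N \subseteq M$. The two identities I would record are $Ann_R(N/K)=(K:_RN)$ and $(0:_{M/K}J)=(K:_MJ)/K$ for every ideal $J$; writing $b:=(K:_RN)$ these give $(0:_{M/K}Ann^2_R(N/K))=(K:_Mb^2)/K$. Since $K \subseteq N$, the target inclusion $s(0:_{M/K}Ann^2_R(N/K)) \subseteq N/K$ is equivalent, after applying $\pi^{-1}$, to producing an $s \in S$ with $s(K:_Mb^2) \subseteq N$.

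The main obstacle is precisely this last inclusion, because full $S$-coidempotence of $M$ controls \emph{absolute} colons $(0:_M-)$, whereas the quotient forces on us the \emph{relative} colon $(K:_M-)$, and the two differ exactly by the kernel $K$. Applying full $S$-coidempotence to $N$ itself gives $s_1 \in S$ with $s_1(0:_Ma_0^2) \subseteq N$, where $a_0:=(0:_RN) \subseteq b$, whence $s_1(0:_Mb^2) \subseteq s_1(0:_Ma_0^2) \subseteq N$; the remaining task is to absorb the discrepancy between $(K:_Mb^2)$ and $(0:_Mb^2)$ into $K$. I would handle this by establishing an $S$-copure-type inclusion for the kernel, namely $s_2(K:_Mb^2) \subseteq K+(0:_Mb^2)$ for some $s_2 \in S$, which I expect to extract from the hypothesis that every submodule of $M$ (in particular $K$) is $S$-coidempotent. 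Granting this, multiplying by $s_1$ and using $s_1K \subseteq K \subseteq N$ together with $s_1(0:_Mb^2) \subseteq N$ yields $s_1s_2(K:_Mb^2) \subseteq N$ with $s_1s_2 \in S$, which completes (c). The crux is therefore this copure-type step; everything else is routine translation between $M$ and $M/K$.
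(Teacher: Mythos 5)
Parts (a) and (b) of your proposal are correct and essentially identical to the paper's own argument: for (b) the paper likewise writes $(x/1)(a/s)=1$, extracts $u\in S$ with $us=uxa$, and multiplies the inclusion $x(0:_MAnn^2_R(N))\subseteq N$ through by the complementary factor. The problem is part (c). Your reduction to the canonical quotient, and the identities $Ann_R(N/K)=(K:_RN)$ and $(0:_{M/K}J)=(K:_MJ)/K$, are fine, and you correctly isolate the crux: producing $s\in S$ with $s(K:_Mb^2)\subseteq N$, where $b=(K:_RN)$. But the step you propose for this, the copure-type inclusion $s_2(K:_Mb^2)\subseteq K+(0:_Mb^2)$, is precisely where the proof is missing, and the source you point to cannot supply it. $S$-coidempotence of $K$ only controls $(0:_MAnn^2_R(K))$, and there is no containment in either direction between $b=(K:_RN)$ and $Ann_R(K)$, so nothing ties $(K:_Mb^2)$ to that datum. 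The inclusion you want is in fact true when $M$ is fully $S$-coidempotent, but in the paper it is Corollary \ref{c3.4}(a) (fully $S$-coidempotent implies fully $S$-copure), whose proof runs through Proposition \ref{p2.8}(c) itself via Lemma \ref{l2.5} and Theorem \ref{t3.3}; so invoking it here is circular, and an independent proof of it is exactly the content you still owe.

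The paper closes this gap with a more elementary device that fits your scaffolding verbatim. First, relative colons become absolute ones after multiplying the ideal by the annihilator of the kernel: if $b^2m\subseteq K$ then $Ann_R(K)b^2m=0$, i.e. $(K:_Mb^2)\subseteq (0:_MAnn_R(K)b^2)$. Since $K\subseteq N$ gives $Ann_R(K)\supseteq Ann_R(N)$, and also $b=(K:_RN)\supseteq Ann_R(N)$, we get $Ann_R(K)b^2\supseteq Ann^3_R(N)$, hence $(K:_Mb^2)\subseteq (0:_MAnn^3_R(N))$. Second, the coidempotence inclusion self-improves from squares to cubes at the cost of squaring the witness: from $s_1(0:_MAnn^2_R(N))\subseteq N$, take $m$ with $Ann^3_R(N)m=0$; then $Ann_R(N)m\subseteq (0:_MAnn^2_R(N))$, so $s_1Ann_R(N)m\subseteq N$, so $Ann^2_R(N)s_1m\subseteq Ann_R(N)N=0$, so $s_1m\in (0:_MAnn^2_R(N))$ and $s_1^2m\in N$; that is, $s_1^2(0:_MAnn^3_R(N))\subseteq N$. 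Combining the two gives $s_1^2(K:_Mb^2)\subseteq N$, which is your target with $s=s_1^2$. This is exactly the paper's proof (stated there with the quotient written as $M/N$ and the submodule as $K/N$), and with it your part (c) is complete; without it, the proposal has a genuine gap at its self-identified crux.
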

\begin{proof}
(a) This is clear.

(b) Let $M$ be a fully $S$-coidempotent $R$-module. Since $S\subseteq S^*$,  $M$ is a fully $S^*$-coidempotent $R$-module by part (a).
For the converse, assume that $M$ is a fully $S^*$-coidempotent module and $N$ is a submodule of $M$.
Then there exists an $x \in S^*$ such that $x(0:_MAnn^2_R(N))\subseteq N$. As $x \in S^*$,  $x/1$ is a unit of $S^{-1}R$ and so $(x/1)(a/s)=1$ for some $a \in R$ and $s \in S$. This implies that $us = uxa$ for some $u \in S$. Thus we have
$$
us(0:_MAnn^2_R(N)) =uxa(0:_MAnn^2_R(N)) \subseteq x(0:_MAnn^2_R(N)) \subseteq N.
$$
Therefore, $M$ is a fully $S$-coidempotent $R$-module.

(c) Let $N$ be a submodule of $M$ and $K/N$ a submodule of $M/N$. By assumption, there exists an $s \in S$ such that $s(0:_MAnn^2_R(K))\subseteq K$. Hence
\begin{align}
\nonumber
(0:_MAnn^3_R(K))&=((0:_MAnn^2_R(K)):_MAnn_R(K))
\\ \nonumber
&\subseteq (K:_MsAnn_R(K))
\\ \nonumber
&\subseteq (0:_MsAnn^2_R(K)).
\end{align}
It follows that
$s^2(0:_MAnn^3_R(K))\subseteq K$. Thus
\begin{align}
\nonumber
s^2(0:_{M/N}Ann^2(K/N))&\subseteq s^2(0:_MAnn_R(N)Ann^2_R(K/N))/N
\\ \nonumber
&\subseteq s^2(0:_MAnn^3_R(K))/N\subseteq K/N,
\end{align}
as needed.
\end{proof}

The following theorem provides some characterizations for fully coidempotent $R$-modules.
\begin{thm}\label{t2.8}
Let $M$ be an R-module. Then the following statements are equivalent:
\begin{itemize}
\item [(a)] $M$ is a fully coidempotent $R$-module;
\item [(b)] $M$ is a fully $(R-P)$-coidempotent $R$-module for each prime ideal $P$ of $R$;
\item [(c)] $M$ is a fully $(R-\mathfrak{m})$-coidempotent $R$-module for each maximal ideal $\mathfrak{m}$ of $R$;
\item [(d)] $M$ is a fully $(R-\mathfrak{m})$-coidempotent $R$-module for each maximal ideal $\mathfrak{m}$ of $R$ with $M_{\mathfrak{m}}\not=0_{\mathfrak{m}}$.
\end{itemize}
\end{thm}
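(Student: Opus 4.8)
The plan is to prove the cycle of implications $(a)\Rightarrow(b)\Rightarrow(c)\Rightarrow(d)\Rightarrow(a)$, where the first three steps are essentially formal and the last carries all the content. For $(a)\Rightarrow(b)$ I would simply invoke Proposition~\ref{p2.3}: a fully coidempotent module is fully $S$-coidempotent for \emph{every} multiplicatively closed subset $S$, in particular for $S=R-P$ with $P$ a prime ideal. The implication $(b)\Rightarrow(c)$ is immediate because every maximal ideal is prime, so the family $\{R-\mathfrak{m}\}$ is a subfamily of $\{R-P\}$; and $(c)\Rightarrow(d)$ is merely a restriction to those maximal ideals with $M_{\mathfrak{m}}\neq 0_{\mathfrak{m}}$, hence a trivial weakening.

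The substantive implication is $(d)\Rightarrow(a)$. I would fix a submodule $N$ of $M$ and first note that, since $Ann^2_R(N)\subseteq Ann_R(N)$ annihilates $N$, one always has $N\subseteq (0:_MAnn^2_R(N))$; thus the task reduces to the reverse inclusion $(0:_MAnn^2_R(N))\subseteq N$. I would argue elementwise: take $x\in (0:_MAnn^2_R(N))$, set $I=(N:_Rx)$, and suppose for contradiction that $x\notin N$, i.e. that $I\neq R$. Then $I$ lies in some maximal ideal $\mathfrak{m}$, and the strategy is to produce an element $s\in R-\mathfrak{m}$ with $sx\in N$; since such an $s$ then belongs to $I\subseteq\mathfrak{m}$, this contradicts $s\notin\mathfrak{m}$ and forces $x\in N$.

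The crux, and the step I expect to be the main obstacle, is producing that witness $s\in R-\mathfrak{m}$ while having only hypothesis $(d)$, which guarantees the $S$-coidempotent property solely at maximal ideals with nonzero localization. I would therefore split into two cases according to $M_{\mathfrak{m}}$. If $M_{\mathfrak{m}}\neq 0_{\mathfrak{m}}$, then hypothesis $(d)$ provides $s\in R-\mathfrak{m}$ with $s(0:_MAnn^2_R(N))\subseteq N$, whence $sx\in N$ as required. If instead $M_{\mathfrak{m}}=0_{\mathfrak{m}}$, then the image of $x$ in $M_{\mathfrak{m}}$ vanishes, so there is $t\in R-\mathfrak{m}$ with $tx=0\in N$, and this $t$ already serves as the witness. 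In either case the witness lands in $I\subseteq\mathfrak{m}$, yielding the desired contradiction. Hence $(0:_MAnn^2_R(N))\subseteq N$ for every submodule $N$, so $M$ is fully coidempotent, which closes the cycle and completes the proof.
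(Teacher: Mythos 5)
Your proof is correct, and it follows the same skeleton as the paper's: the cycle $(a)\Rightarrow(b)\Rightarrow(c)\Rightarrow(d)\Rightarrow(a)$, with the first three implications handled exactly as you handle them (Proposition~\ref{p2.3}, maximality implies primality, trivial restriction). The only divergence is in how $(d)\Rightarrow(a)$ is assembled. The paper fixes the submodule $N$ and shows, for every maximal ideal $\mathfrak{m}$, that $(0:_MAnn^2_R(N))_{\mathfrak{m}}\subseteq N_{\mathfrak{m}}$ --- using hypothesis $(d)$ when $M_{\mathfrak{m}}\neq 0_{\mathfrak{m}}$ (the witness $s\notin\mathfrak{m}$ becomes a unit after localization) and triviality when $M_{\mathfrak{m}}=0_{\mathfrak{m}}$ --- and then invokes the standard local-global principle for inclusions of submodules to conclude $(0:_MAnn^2_R(N))\subseteq N$. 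You instead argue elementwise with the colon ideal $I=(N:_Rx)$: the same two cases produce, for any maximal ideal $\mathfrak{m}\supseteq I$, an element of $I$ lying outside $\mathfrak{m}$, forcing $I=R$ and hence $x\in N$. These are two renderings of the same local-global argument: your colon-ideal trick is precisely the usual proof of the principle the paper cites as known, so your version is more self-contained, while the paper's is terser. Both use hypothesis $(d)$ in the identical way (witness at maximal ideals with nonzero localization, vanishing of $x/1$ otherwise), so the mathematical content coincides.
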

\begin{proof}
$(a) \Rightarrow (b)$
 Let $M$ be a fully coidempotent $R$-module and $P$ be a prime ideal of $R$. Then $R-P$ is multiplicatively
closed set of $R$ and so $M$ is a fully $(R-P)$-coidempotent $R$-module by Proposition \ref{p2.3}.

$(b) \Rightarrow (c)$
Since every maximal ideal is a prime ideal, the result follows from the part (b).

$(c) \Rightarrow (d)$
This is clear.

$(d) \Rightarrow (a)$
Let $N$ be a submodule of $M$. Take a maximal ideal $\mathfrak{m}$ of $R$ with $M_{\mathfrak{m}}\not=0_{\mathfrak{m}}$. As
$M$ is a fully $(R-\mathfrak{m})$-coidempotent module, there exists an $s \not \in \mathfrak{m}$ such that $s(0:_MAnn^2_R(N))\subseteq N$. This implies that
$$
(0:_MAnn^2_R(N))_{\mathfrak{m}}=(s(0:_MAnn^2_R(N)))_{\mathfrak{m}}\subseteq N_{\mathfrak{m}}.
$$
If $M_{\mathfrak{m}}=0_{\mathfrak{m}}$, then clearly $(0:_MAnn^2_R(N))_{\mathfrak{m}}\subseteq N_{\mathfrak{m}}$. So, we have $(0:_MAnn^2_R(N))_{\mathfrak{m}}\subseteq N_{\mathfrak{m}}$ for each maximal ideal $\mathfrak{m}$ of $R$. It follows that $(0:_MAnn^2_R(N))\subseteq N$. Thus $N=(0:_MAnn^2_R(N))$ because the inverse inclusion is clear.
\end{proof}

\begin{prop}\label{p2.15}
Let $f : M \rightarrow \acute{M}$ be an $R$-monomorphism of $R$-modules. Then we have the following.
\begin{itemize}
\item [(a)] If $\acute{M}$ is a fully $S$-coidempotent module, then $M$ is a fully $S$-coidempotent module.
\item [(b)] If $M$ is a fully $S$-coidempotent module and $t\acute{M} \subseteq f(M)$ for some
$t \in S$, then $\acute{M}$ is a fully $S$-coidempotent module.
\end{itemize}
\end{prop}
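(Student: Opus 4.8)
The plan is to handle the two parts separately, in both cases reducing a statement about submodules of one module to a statement about their images or preimages under $f$, and exploiting that injectivity of $f$ preserves annihilators: for any submodule $L$ of $M$ one has $Ann_R(f(L))=Ann_R(L)$, and more generally $f((0:_M J))\subseteq (0:_{\acute{M}}J)$ for every ideal $J$ of $R$, with equalities of the relevant inclusions detected back in $M$ through injectivity.

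For part (a), I would take an arbitrary submodule $N$ of $M$ and pass to $f(N)\subseteq \acute{M}$. Since $\acute{M}$ is fully $S$-coidempotent, $f(N)$ is $S$-coidempotent, so there is an $s\in S$ with $s(0:_{\acute{M}}Ann^2_R(f(N)))\subseteq f(N)$. Using $Ann^2_R(f(N))=Ann^2_R(N)$ together with $f((0:_M Ann^2_R(N)))\subseteq (0:_{\acute{M}}Ann^2_R(N))$, I would obtain $f(s(0:_M Ann^2_R(N)))\subseteq f(N)$, and injectivity of $f$ yields $s(0:_M Ann^2_R(N))\subseteq N$. Hence $N$ is $S$-coidempotent in $M$, proving (a).

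For part (b), the idea is to pull an arbitrary submodule $\acute{N}\subseteq \acute{M}$ back along $f$ and use the approximation $t\acute{M}\subseteq f(M)$ to transfer $S$-coidempotence from $M$ to $\acute{M}$, at the cost of multiplying by a power of $t$. I would set $N:=f^{-1}(\acute{N})$, so that $f(N)=\acute{N}\cap f(M)$; the hypothesis then gives $t\acute{N}\subseteq f(N)\subseteq \acute{N}$. Comparing annihilators of these three modules (and using $Ann_R(f(N))=Ann_R(N)$) yields the two-sided estimate $Ann_R(\acute{N})\subseteq Ann_R(N)\subseteq (Ann_R(\acute{N}):_R t)$, that is $t\,Ann_R(N)\subseteq Ann_R(\acute{N})$, and therefore $t^2 Ann^2_R(N)\subseteq Ann^2_R(\acute{N})$. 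Next I would take $m\in (0:_{\acute{M}}Ann^2_R(\acute{N}))$ and write $tm=f(x)$ with $x\in M$, which is possible because $tm\in t\acute{M}\subseteq f(M)$. For every $a\in Ann^2_R(N)$ one has $t^2 a\in Ann^2_R(\acute{N})$, so $t^2(am)=(t^2a)m=0$; applying $f$ to $t^2 ax$ gives $f(t^2 ax)=t^2 a\,f(x)=t^3(am)=0$, whence $t^2 ax=0$ by injectivity. Thus $t^2 x\in (0:_M Ann^2_R(N))$. Since $M$ is fully $S$-coidempotent, $N$ is $S$-coidempotent, so there is $s\in S$ with $s(0:_M Ann^2_R(N))\subseteq N$, giving $st^2 x\in N$ and hence $st^3 m=f(st^2 x)\in f(N)\subseteq \acute{N}$. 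As $m$ was arbitrary, $st^3(0:_{\acute{M}}Ann^2_R(\acute{N}))\subseteq \acute{N}$ with $st^3\in S$, so $\acute{N}$ is $S$-coidempotent and $\acute{M}$ is fully $S$-coidempotent.

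The main obstacle I anticipate is the bookkeeping of powers of $t$ in part (b): the approximation $t\acute{M}\subseteq f(M)$ only lets me lift $tm$, not $m$, into $f(M)$, so each passage across $f$ introduces an extra factor of $t$, and I must make sure these factors accumulate into a single element of $S$ (here $st^3$) rather than forcing an inductive blow-up. Keeping track of exactly where $t^2 Ann^2_R(N)\subseteq Ann^2_R(\acute{N})$ is used, and noting that $t^2(am)=0$ already forces $t^3(am)=0$, is the delicate point; part (a) is essentially formal once the annihilator and colon identities for a monomorphism are recorded.
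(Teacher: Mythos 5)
Your proof of part (a) is correct and is essentially the paper's own argument: push $N$ forward to $f(N)$, use $Ann_R(f(N))=Ann_R(N)$, and pull the resulting inclusion back through the injection. For part (b) you follow the same skeleton as the paper (set $N=f^{-1}(\acute{N})$, apply full $S$-coidempotence of $M$ to $N$, and transfer back using $t\acute{M}\subseteq f(M)$), but you diverge at the key annihilator comparison, and your version is the correct one. The paper disposes of this point by asserting that $Ann^2_R(f^{-1}(\acute{N}))=Ann^2_R(\acute{N})$, which is false in general: for $R=\Bbb Z$, $S=\{2^n: n\ge 0\}$, $t=2$, $f$ the inclusion $2\Bbb Z_8\hookrightarrow \Bbb Z_8$ and $\acute{N}=\Bbb Z_8$, one has $Ann^2_R(f^{-1}(\acute{N}))=16\Bbb Z$ while $Ann^2_R(\acute{N})=64\Bbb Z$; in particular the inclusion $Ann^2_R(f^{-1}(\acute{N}))\subseteq Ann^2_R(\acute{N})$ that the paper's next step silently requires can fail. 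What does hold in general is precisely your two-sided estimate $Ann_R(\acute{N})\subseteq Ann_R(N)$ and $t\,Ann_R(N)\subseteq Ann_R(\acute{N})$, obtained from $t\acute{N}\subseteq f(N)\subseteq \acute{N}$, and your element-by-element bookkeeping of the resulting powers of $t$ (ending with the multiplier $st^3\in S$ rather than the paper's $ts$) closes the argument rigorously. So your proposal not only proves the proposition but also repairs a genuine gap in the paper's own proof of part (b); the accumulation of extra factors of $t$ is harmless exactly because $S$ is multiplicatively closed.
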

\begin{proof}
(a) Let $N$ be a submodule of $M$. Then as $\acute{M}$ is a fully  $S$-coidempotent module, there exists an $s \in S$ such that $s(0:_{\acute{M}}Ann^2_R(f(N))) \subseteq f(N)$. This implies that  $sf^{-1}((0:_{\acute{M}}Ann^2_R(f(N))) )\subseteq f^{-1}(f(N))$.
It follows that $s(0:_MAnn^2_R(N)) \subseteq N$.

(b) Let $\acute{N}$ be a submodule of $\acute{M}$ and $t\acute{M} \subseteq f(M)$ for some
$t \in S$.  Since $M$ is
a fully $S$-coidempotent module, there exists an $s\in S$ such that $s(0:_MAnn^2_R(f^{-1}(\acute{N}))) \subseteq f^{-1}(\acute{N})$. Now as $f$ is monomorphism, we have that
$$
s(0:_{f(M)}Ann^2_R(f^{-1}(\acute{N}))) \subseteq f(f^{-1}(\acute{N}))=f(M) \cap \acute{N} \subseteq \acute{N}.
$$
One can see that  $Ann^2_R(f^{-1}(\acute{N}))=Ann^2_R(\acute{N})$. Therefore, $s(0:_{f(M)}Ann^2_R(\acute{N})) \subseteq \acute{N}$. Thus $ts(0:_{\acute{M}}Ann^2_R(\acute{N})) \subseteq \acute{N}$, as needed.
\end{proof}

\begin{cor}\label{c2.15}
Every submodule of a fully $S$-coidempotent $R$-module is a fully $S$-coidempotent $R$-module.
\end{cor}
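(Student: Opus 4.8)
The plan is to obtain this as an immediate consequence of Proposition \ref{p2.15}(a). Given a fully $S$-coidempotent $R$-module $M$ and an arbitrary submodule $N$ of $M$, the key observation is that the canonical inclusion $\iota : N \hookrightarrow M$ is an $R$-monomorphism whose codomain is precisely the fully $S$-coidempotent module $M$. Thus the result should fall out of the part of the preceding proposition that transfers the property from the codomain of a monomorphism back to its domain.

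Concretely, first I would set up the inclusion map $\iota : N \to M$, $\iota(x) = x$, and note that it is injective, hence an $R$-monomorphism. Then I would invoke Proposition \ref{p2.15}(a) with the domain taken to be $N$ and the codomain $\acute{M}$ taken to be $M$: since the codomain $M$ is fully $S$-coidempotent, the domain $N$ must be fully $S$-coidempotent as well. This yields the conclusion directly, with no further computation.

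Since the entire computational burden---tracking the element $s \in S$ through the relation $s(0:_M Ann^2_R(\iota(L))) \subseteq \iota(L)$ and pulling it back along $\iota^{-1}$ to obtain $s(0:_N Ann^2_R(L)) \subseteq L$ for each submodule $L$ of $N$---has already been carried out in the proof of Proposition \ref{p2.15}(a), there is no genuine obstacle at the level of the corollary. The only point meriting a moment's care is the bookkeeping of which module plays the role of the domain and which the codomain in Proposition \ref{p2.15}(a), so that one applies the correct implication: it is the hypothesis that the \emph{codomain} be fully $S$-coidempotent that forces the \emph{domain} to be fully $S$-coidempotent, and not the reverse. Once the inclusion is recognized as such a monomorphism, the statement is simply part (a) read off in this special case.
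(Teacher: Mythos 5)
Your proposal is correct and is exactly the paper's own argument: the paper proves the corollary by applying Proposition \ref{p2.15} (part (a)) to the inclusion homomorphism $f : N \hookrightarrow M$, just as you do. No gaps; the only care needed is the one you already note, namely that it is part (a) (codomain fully $S$-coidempotent implies domain fully $S$-coidempotent) that applies here.
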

\begin{proof}
Let $N$ be a submodule of a  fully $S$-coidempotent $R$-module $M$. Then the result follows from Proposition \ref{p2.15} by using the inclusion homomorphism $f : N \hookrightarrow M$.
\end{proof}

Let $R_i$ be a commutative ring with identity, $M_i$ be an $R_i$-module for each $i = 1, 2,..., n$, and $n \in \Bbb N$. Assume that
$M = M_1\times M_2\times ...\times M_n$ and $R = R_1\times R_2\times ...\times R_n$. Then $M$ is
an $R$-module with componentwise addition and scalar multiplication. Also,
if $S_i$ is a multiplicatively closed subset of $R_i$ for each $i = 1, 2,...,n$,  then
$S = S_1\times S_2\times ...\times S_n$ is a multiplicatively closed subset of $R$. Furthermore,
each submodule $N$ of $M$ is of the form $N = N_1\times N_2\times...\times N_n$, where $N_i$ is a
submodule of $M_i$.
\begin{thm}\label{l2.12}
Let $M_i$ be an $R_i$-module and $S_i \subseteq R_i$ be a multiplicatively closed subset for $i=1,2$. Assume that
$M = M_1\times  M_2$, $R = R_1\times  R_2$, and $S = S_1\times  S_2$. Then $M$ is a fully $S$-coidempotent $R$-module if and only
if $M_1$ is a fully $S_1$-coidempotent $R_1$-module and $M_2$ is a fully $S_2$-coidempotent $R_2$-module.
\end{thm}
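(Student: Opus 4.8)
The plan is to reduce everything to the factorwise description of submodules, ideals, annihilators, and colon modules over the product ring $R = R_1 \times R_2$, and then to observe that the defining inclusion for $S$-coidempotence simply decouples across the two factors. First I would record the structural facts: every submodule of $M$ has the form $N = N_1\times N_2$ with $N_i \le M_i$, every ideal of $R$ has the form $I_1\times I_2$ with $I_i$ an ideal of $R_i$, and for any such ideal one has $(0:_M I_1\times I_2) = (0:_{M_1}I_1)\times (0:_{M_2}I_2)$, since a pair $(m_1,m_2)$ is killed by $I_1\times I_2$ exactly when $I_i m_i = 0$ for each $i$ (take $i_2 = 0$ with $i_1$ arbitrary, and symmetrically).

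Next I would compute the two ingredients entering the definition. For annihilators, $(r_1,r_2)$ annihilates $N_1\times N_2$ iff $r_i N_i = 0$ for each $i$, so $Ann_R(N) = Ann_{R_1}(N_1)\times Ann_{R_2}(N_2)$. For the square, I would verify the identity $(I_1\times I_2)^2 = I_1^2 \times I_2^2$: the inclusion $\subseteq$ is immediate from $(a_1,a_2)(b_1,b_2) = (a_1b_1, a_2b_2)$, and for $\supseteq$ one writes any $(x,y)$ with $x\in I_1^2$ and $y\in I_2^2$ as $(x,0)+(0,y)$ and expresses each summand as a sum of products of generators of $I_1\times I_2$ (using $0\in I_2$ and $0\in I_1$). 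Combining these with the colon identity yields the crucial decomposition
$$
(0:_M Ann^2_R(N)) = (0:_{M_1} Ann^2_{R_1}(N_1)) \times (0:_{M_2} Ann^2_{R_2}(N_2)).
$$

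With this decomposition in hand the $S$-coidempotence of $N = N_1\times N_2$ becomes transparent: for $s = (s_1,s_2)\in S = S_1\times S_2$ we have $s(0:_M Ann^2_R(N)) = s_1(0:_{M_1}Ann^2_{R_1}(N_1)) \times s_2(0:_{M_2}Ann^2_{R_2}(N_2))$, and this lies in $N_1\times N_2$ precisely when $s_i(0:_{M_i}Ann^2_{R_i}(N_i))\subseteq N_i$ for $i = 1,2$. For the backward direction I would take an arbitrary $N = N_1\times N_2$, invoke full $S_i$-coidempotence of $M_i$ to obtain $s_i \in S_i$ witnessing $S_i$-coidempotence of $N_i$, and assemble $s = (s_1,s_2)\in S$ to witness $S$-coidempotence of $N$. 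For the forward direction, given $N_1 \le M_1$, I would apply full $S$-coidempotence of $M$ to the submodule $N_1\times 0$ and read off the first coordinate of the resulting witness $(s_1,s_2)$, noting $s_1\in S_1$, to conclude that $N_1$ is $S_1$-coidempotent; the argument for $M_2$ is symmetric.

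I do not anticipate a genuine obstacle. The only point requiring care is the verification that squaring an ideal of the product ring is computed factorwise, namely $(I_1\times I_2)^2 = I_1^2\times I_2^2$, since a careless reading might conflate this with a product of ideals inside a single factor. Once that identity is settled, every remaining step is a routine unwinding of definitions, and the statement extends to an arbitrary finite product $M_1\times\cdots\times M_n$ by an obvious induction.
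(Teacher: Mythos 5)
Your proposal is correct and follows essentially the same route as the paper's own proof: both reduce to the componentwise form $N = N_1 \times N_2$ of submodules, use the factorwise decomposition of $(0:_M Ann^2_R(N))$ to assemble a witness $s=(s_1,s_2)$ for the backward direction, and test $N_1\times \{0\}$ (reading off the first coordinate of the witness) for the forward direction. Your explicit verification of $(I_1\times I_2)^2 = I_1^2\times I_2^2$ is a detail the paper leaves implicit, but it does not change the argument.
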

\begin{proof}
 First assume that $M$ is a fully $S$-coidempotent $R$-module, without loss of generality we show that $M_1$ is a fully
 $S_1$-coidempotent $R_1$-module.
Let $N_1$ be a submodule of $M_1$. Then $N_1\times  \{0\}$ is a submodule of $M$. As $M$ is a fully $S$-coidempotent
$R$-module, there exists an $s = (s_1, s_2)  \in S_1\times  S_2$ such that $(s_1, s_2)(0:_MAnn^2_{R_1}(N_1\times \{0\})) \subseteq N_1\times  \{0\}$. This in turn implies that $s_1(0:_{M_1}Ann^2_{R_1}(N_1)) \subseteq N_1$. Thus $M_1$ is a fully $S_1$-coidempotent $R_1$-module. Now assume that $M_1$ is a fully $S_1$-coidempotent $R_1$-module and $M_2$ is a fully $S_2$-coidempotent $R_2$-module. Let $N$ be a submodule of $M$. Then $N$ must be in the form of $N_1\times  N_2$, where $N_1$ is a submodule of $M_1$ and $N_2$ is a submodule of $M_2$. As $M_1$ is a fully $S_1$-coidempotent $R_1$-module, there exists an $s_1\in S_1$
such that  $s_1(0:_{M_1}Ann^2_{R_1}(N_1)) \subseteq N_1$. Similarly, there exists an element $s_2 \in S_2$ such that  $s_2(0:_{M_2}Ann^2_{R_2}(N_2)) \subseteq N_2$. Set $s = (s_1, s_2) \in S$. Then we have
\begin{align}
 \nonumber
(s_1, s_2)(0:_MAnn^2_R(N)) &\subseteq s_1(0:_{M_1}Ann^2_{R_1}(N_1))\times  s_2(0:_{M_2}Ann^2_{R_2}(N_2))
\\ \nonumber
&  \subseteq N_1 \times N_2=N.
\end{align}
So, $M$ is a fully $S$-coidempotent $R$-module.
\end{proof}

\begin{thm}\label{t2.13}
Let $M_i$ be an $R_i$-module and $S_i$  be a multiplicatively closed subset of $R_i$ for $i=1,2,..,n$.  Assume that $M = M_1\times ...\times M_n$, $R = R_1\times ...\times R_n$ and
$S = S_1\times ...\times S_n$. Then the following statements are equivalent:
\begin{itemize}
\item [(a)] $M$ is a fully $S$-coidempotent $R$-module;
\item [(b)] $M_i$ is a fully $S_i$-coidempotent $R_i$-module for each $i\in \{1, 2, ..., n\}$.
\end{itemize}
\end{thm}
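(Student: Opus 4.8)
The plan is to argue by induction on $n$, using Theorem \ref{l2.12} both as the base case and as the engine of the inductive step. For $n=1$ the equivalence is a tautology, and for $n=2$ it is precisely the content of Theorem \ref{l2.12}, so the base case is settled. The point of the induction is simply to peel off one factor at a time and reduce to the two-factor situation that has already been handled.

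For the inductive step, suppose $n \geq 3$ and that the equivalence holds for every product of $n-1$ factors. I would set $R' = R_2 \times \cdots \times R_n$, $M' = M_2 \times \cdots \times M_n$, and $S' = S_2 \times \cdots \times S_n$, so that $M'$ is an $R'$-module and $S'$ is a multiplicatively closed subset of $R'$. There are natural identifications of $R$ with $R_1 \times R'$ as a ring, of $M$ with $M_1 \times M'$ as a module, and of $S$ with $S_1 \times S'$ as a multiplicatively closed subset. Under these identifications I would first apply Theorem \ref{l2.12} to the two-factor decomposition $M \cong M_1 \times M'$, obtaining that $M$ is fully $S$-coidempotent if and only if $M_1$ is fully $S_1$-coidempotent and $M'$ is fully $S'$-coidempotent. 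I would then apply the induction hypothesis to $M' = M_2 \times \cdots \times M_n$, a product of $n-1$ factors, to conclude that $M'$ is fully $S'$-coidempotent if and only if $M_i$ is fully $S_i$-coidempotent for every $i \in \{2, \dots, n\}$. Chaining the two equivalences yields $(a) \Leftrightarrow (b)$, completing the induction.

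The only delicate point is verifying that the regrouping is faithful: one must check that a submodule $N$ of $M$, its annihilator $Ann^2_R(N)$, and the object $(0:_MAnn^2_R(N))$ computed in $M$ over $R$ all agree with the corresponding objects computed in $M_1 \times M'$ over $R_1 \times R'$ after the identification. Since in a finite direct product every submodule splits as a product of component submodules, and since annihilators, intersections, and the formation of $(0:_M -)$ are all computed componentwise, this compatibility is routine — indeed it is exactly the componentwise description already exploited in the proof of Theorem \ref{l2.12}. I expect this bookkeeping to be the main, though mild, obstacle; once it is in place, everything else is a formal consequence of the two cited results.
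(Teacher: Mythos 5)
Your proposal is correct and follows essentially the same route as the paper: induction on $n$ with Theorem \ref{l2.12} supplying both the base case $n=2$ and the inductive step, the only cosmetic difference being that you peel off the first factor ($M \cong M_1 \times (M_2 \times \cdots \times M_n)$) while the paper peels off the last ($M \cong (M_1 \times \cdots \times M_{n-1}) \times M_n$). Your added remark on the componentwise compatibility of submodules and annihilators under regrouping is a point the paper leaves implicit, but it is the same argument.
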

\begin{proof}
We use induction. If $n =1$, the claim is trivial. If $n =2$, the
claim follows from Theorem \ref{l2.12}. Assume that the claim is true for $n<k$ and we
show that it is also true for $n=k$. Put $M = (M_1\times ... \times M_{n-1})\times  M_n$, $R = (R_1\times  R_2\times ...\times  R_{n-1})\times  R_n$ and $S = (S_1\times ...\times S_{n-1})\times  S_n$. By Theorem \ref{l2.12}, $M$ is fully $S$-coidempotent $R$-module if and only if $M_1\times ... \times M_{n-1}$ is a fully $(S_1\times ...\times S_{n-1})$-coidempotent  $(R_1\times  R_2\times ...\times  R_{n-1})$-module and $M_n$ is a fully $S_n$-coidempotent $R_n$-module. Now the rest follows from the induction hypothesis.
\end{proof}

The following lemma is known, but we write its proof here for the sake of references.
\begin{lem}\label{l33.10}
Let $M$ be an $R$-module. Then we have the following.
\begin{itemize}
\item [(a)] If $S$ satisfying the maximal multiple condition, then $S^{-1}(0:_MI)= (0:_{S^{-1}M}S^{-1}I)$ and $S^{-1}(Ann_R(N))=Ann_{S^{-1}R}(S^{-1}N)$ for each ideal $I$ of $R$ and submodule $N$ of $M$.
\item [(b)] If $I$ is an $S$-finite ideal of $R$ and $N$ is an $S$-finite submodule of $M$, then $S^{-1}(0:_MI)= (0:_{S^{-1}M}S^{-1}I)$ and $S^{-1}(Ann_R(N))=Ann_{S^{-1}R}(S^{-1}N)$.
\end{itemize}
 \end{lem}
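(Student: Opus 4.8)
The plan is to prove each of the four claimed equalities by establishing two inclusions, one of which costs nothing. For any ideal $I$ of $R$, the inclusion $S^{-1}(0:_MI)\subseteq (0:_{S^{-1}M}S^{-1}I)$ is immediate: if $x\in(0:_MI)$ and $s\in S$, then for every $a\in I$ and $t\in S$ we have $(a/t)(x/s)=ax/(ts)=0$ since $ax=0$, so $x/s$ annihilates $S^{-1}I$. The same one-line check gives $S^{-1}(Ann_R(N))\subseteq Ann_{S^{-1}R}(S^{-1}N)$. Hence the whole content of the lemma lies in the reverse inclusions, and the hypotheses in (a) and (b) are exactly the two devices that push them through.

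The obstacle in the reverse direction is uniformity of the multiplier. If $x/s\in(0:_{S^{-1}M}S^{-1}I)$, then for each individual $a\in I$ the relation $ax/s=0$ in $S^{-1}M$ only furnishes some $u_a\in S$, depending on $a$, with $u_aax=0$ in $M$. To conclude $x/s\in S^{-1}(0:_MI)$ one must instead produce a single $u\in S$ with $uIx=0$, i.e. $ux\in(0:_MI)$. Merging infinitely many element-wise relations into one multiplier is precisely what fails for an arbitrary ideal, and is where each hypothesis is spent.

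For part (a), fix by the maximal multiple condition an element $s_0\in S$ with $t\mid s_0$ for every $t\in S$. Given $x/s\in(0:_{S^{-1}M}S^{-1}I)$ and $a\in I$, choose $u_a\in S$ with $u_aax=0$; writing $s_0=u_aw_a$ with $w_a\in R$ gives $s_0ax=w_a(u_aax)=0$. Since $s_0$ is independent of $a$, this holds for all $a\in I$, so $s_0Ix=0$, that is $s_0x\in(0:_MI)$, and hence $x/s=(s_0x)/(s_0s)\in S^{-1}(0:_MI)$. The annihilator equality in (a) is the identical computation for the scalar action of $R$ on $N$: an $r/s$ killing $S^{-1}N$ yields $u_nrn=0$ for each $n\in N$, whence $s_0rn=0$ for all $n$, giving $s_0r\in Ann_R(N)$.

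For part (b), use $S$-finiteness to trade the whole ideal for finitely many generators. Choose a finitely generated $K=Ra_1+\cdots+Ra_n$ with $vI\subseteq K\subseteq I$ for some $v\in S$. Given $x/s\in(0:_{S^{-1}M}S^{-1}I)$, pick $u_i\in S$ with $u_ia_ix=0$ and set $u=u_1\cdots u_n\in S$; then $ua_ix=0$ for all $i$, so $uKx=0$, and therefore $uvIx\subseteq uKx=0$, giving $uvx\in(0:_MI)$ and $x/s\in S^{-1}(0:_MI)$. The annihilator equality in (b) runs identically, with a finitely generated $L=Rn_1+\cdots+Rn_m$ satisfying $wN\subseteq L\subseteq N$ for some $w\in S$ in place of $K$. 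I expect the only delicate point to be bookkeeping the multipliers $v,w$ from the $S$-finite condition; everything else is a direct computation.
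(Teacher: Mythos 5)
Your proof is correct and follows essentially the same route as the paper's: the easy inclusions are noted, and the reverse inclusions are obtained in (a) by using the maximal-multiple element $s_0$ as a single uniform multiplier absorbing each elementwise annihilating factor, and in (b) by multiplying the finitely many multipliers attached to the $S$-finite generators together with the $S$-finiteness multiplier $v$. If anything, your write-up is slightly more careful than the paper's at one point: the divisibility $u_a \mid s_0$ only gives a cofactor $w_a \in R$ (as you say), not in $S$ as the paper's proof asserts, though this does not affect the argument.
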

\begin{proof}
(a) Let $x/h \in (0:_{S^{-1}M}S^{-1}I)$ and $a \in I$. Then $(x/h)(a/1)=0$. Hence there exists an $u \in S$ such that $uxa=0$. As $S$ satisfying the maximal multiple condition, there exists an $s \in S$ such that $t\mid s$ for each $t \in S$. Thus $s=uv$ for some $v \in S$. Therefore, $sxI=0$. So, $sx \in (0:_MI)$. Hence, $x/h=(xs)/(sh) \in S^{-1}(0:_MI)$ and so  $(0:_{S^{-1}M}S^{-1}I)\subseteq S^{-1}(0:_MI)$. The reverse inclusion is clear.  Similarly, one can see that $S^{-1}(Ann_R(N))=Ann_{S^{-1}R}(S^{-1}N)$.

(b) Let $I$ be an $S$-finite ideal of $R$. Then there exist $t \in S$ and $a_1,a_2,...,a_n \in I$ such that
$tI \subseteq Ra_1+...+Ra_n \subseteq I$. Now let $x/h \in (0:_{S^{-1}M}S^{-1}I)$. Then $(x/h)(a_i/1)=0$ for $i=1,2,..,n$. Hence there exists an $u_i \in S$ such that $u_ixa_i=0$ for $i=1,2,..,n$. Set $U=u_1u_2...u_n$. Then we have $uxa_i=0$ for $i=1,2,..,n$. This implies that $utxI=0$. Hence,
 $utx \in (0:_MI)$. Thus $x/h=(xut)/(uth) \in S^{-1}(0:_MI)$ and so  $(0:_{S^{-1}M}S^{-1}I)\subseteq S^{-1}(0:_MI)$. The reverse inclusion is clear.   Similarly, if $N$ is an $S$-finite submodule of $M$ one can see that $S^{-1}(Ann_R(N))=Ann_{S^{-1}R}(S^{-1}N)$.
\end{proof}

\begin{thm}\label{p33.10}
Let $S$ satisfying the maximal multiple condition and $M$ be an $R$-module. Then $M$ is a fully $S$-coidempotent $R$-module if and only if $S^{-1}M$ is a fully coidempotent $S^{-1}R$-module.
 \end{thm}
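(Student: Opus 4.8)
The plan is to move back and forth between $M$ and its localization $S^{-1}M$, using Lemma \ref{l33.10}(a) to commute the operations $Ann_R(-)$ and $(0:_M-)$ with localization. The key preliminary observation is that for \emph{any} submodule $N$ of $M$ one automatically has $N \subseteq (0:_M Ann^2_R(N))$: every element of $Ann^2_R(N)$ is a sum of products of two elements of $Ann_R(N)$, so already one factor kills $N$, whence $Ann^2_R(N)\cdot N = 0$. The same reasoning gives $N' \subseteq (0:_{S^{-1}M} Ann^2_{S^{-1}R}(N'))$ for submodules $N'$ of $S^{-1}M$. Thus in both the $S$-coidempotent and the coidempotent conditions only the reverse inclusion is ever at stake.

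First I would set up the identification that drives both implications. By Lemma \ref{l33.10}(a), for a submodule $N$ of $M$ we have $Ann_{S^{-1}R}(S^{-1}N) = S^{-1}Ann_R(N)$, and since squaring an extended ideal is the extension of the square, $Ann^2_{S^{-1}R}(S^{-1}N) = (S^{-1}Ann_R(N))^2 = S^{-1}(Ann^2_R(N))$. Applying Lemma \ref{l33.10}(a) once more to the ideal $I = Ann^2_R(N)$ gives
$$
(0:_{S^{-1}M} Ann^2_{S^{-1}R}(S^{-1}N)) = (0:_{S^{-1}M} S^{-1}Ann^2_R(N)) = S^{-1}(0:_M Ann^2_R(N)).
$$

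For the forward direction, assume $M$ is fully $S$-coidempotent. An arbitrary submodule of $S^{-1}M$ has the form $S^{-1}N$. Picking $s \in S$ with $s(0:_M Ann^2_R(N)) \subseteq N$ and localizing, and using that $s/1$ is a unit in $S^{-1}R$ (so $S^{-1}(s\,X) = S^{-1}X$ for any submodule $X$), I obtain $S^{-1}(0:_M Ann^2_R(N)) \subseteq S^{-1}N$; by the displayed identity this is exactly $(0:_{S^{-1}M} Ann^2_{S^{-1}R}(S^{-1}N)) \subseteq S^{-1}N$, so together with the always-valid reverse inclusion $S^{-1}N$ is coidempotent.

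For the converse, assume $S^{-1}M$ is fully coidempotent and fix a submodule $N$ of $M$; set $K = (0:_M Ann^2_R(N))$, so $N \subseteq K$. Coidempotence of $S^{-1}N$ combined with the displayed identity yields $S^{-1}N = S^{-1}K$. The main obstacle is to upgrade this equality of localizations into a \emph{uniform} containment $sK \subseteq N$ for a single $s \in S$: a priori each $k \in K$ only produces some $s_k \in S$ with $s_k k \in N$, and without a common multiplier this falls short of $S$-coidempotence. This is precisely where the maximal multiple condition enters: choosing $s \in S$ with $t \mid s$ for all $t \in S$, I write $s = s_k r_k$ with $r_k \in R$, so $sk = r_k(s_k k) \in N$; hence $sK = s(0:_M Ann^2_R(N)) \subseteq N$, and $N$ is $S$-coidempotent. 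As $N$ was arbitrary, $M$ is fully $S$-coidempotent, completing the equivalence.
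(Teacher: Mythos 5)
Your proof is correct and follows essentially the same route as the paper's: both hinge on using Lemma \ref{l33.10}(a) to identify $(0:_{S^{-1}M}Ann^2_{S^{-1}R}(S^{-1}N))$ with $S^{-1}(0:_MAnn^2_R(N))$, and both invoke the maximal multiple condition in the converse direction to convert elementwise multipliers $s_k k \in N$ into a single uniform $s$ with $s(0:_MAnn^2_R(N))\subseteq N$. Your forward direction is slightly cleaner (using that $s/1$ is a unit rather than chasing fractions), but the argument is the same in substance.
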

\begin{proof}
First note that as $S$ satisfying the maximal multiple condition, $S^{-1}(0:_MAnn^2_R(N))=(0:_{S^{-1}M}Ann^2_{S^{-1}R}(S^{-1}N))$ for each submodule $N$ of $M$ by Lemma \ref{l33.10} (a). Now suppose that $M$ is a fully $S$-coidempotent $R$-module and $S^{-1}N$ is a submodule of $S^{-1}M$. Then there exists an $t \in S$ such that $t(0:_MAnn^2_R(N)) \subseteq N$. Let
$x/h_1 \in(0:_{S^{-1}M}Ann^2_{S^{-1}R}(S^{-1}N))$. Then $x/h_1 \in S^{-1}(0:_MAnn^2_R(N))$ and so  $x/h_1=y/h_2$, where $y \in (0:_MAnn^2_R(N))$ and $h_2 \in S$. Thus there exists $u \in S$ such that $xh_2u=yh_1u$. So, $txh_2u=yh_1ut \in N$. It follows that $x/h_1=(xth_2u)/(h_1th_2u) \in S^{-1}N$.
This in turn implies that $S^{-1}N=(0:_{S^{-1}M}Ann^2_{S^{-1}R}(S^{-1}N))$ and $S^{-1}M$ is a fully coidempotent $S^{-1}R$-module. Conversely, assume that $S^{-1}M$ is a fully coidempotent $S^{-1}R$-module and $N$ is a submodule of $M$. Then $S^{-1}N=(0:_{S^{-1}M}Ann^2_{S^{-1}R}(S^{-1}N))$. Let $m \in (0:_MAnn^2_R(N))$. Then $m/1=n/s_1$ for some $n \in N$ and $s_1 \in S$. Thus $ms_1s_2=ns_2$ for some $s_2 \in S$. As $S$ satisfying the maximal multiple condition, there exists an $s \in S$ such that  $t\mid s$ for each $t \in S$. Therefore, $ms^2=nsv \in N$ for some $v \in S$. Thus $x \in (N:_Ms^2)$. Thus $s^2(0:_MAnn^2_R(N)) \subseteq N$, as needed.
\end{proof}

\begin{prop}\label{p333.10}
Let $R$ be an $S$-Noetherian ring. If $M$ is a fully $S$-coidempotent $R$-module, then $S^{-1}M$ is a fully coidempotent $S^{-1}R$-module.
 \end{prop}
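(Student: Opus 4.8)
The plan is to verify the defining equality of a fully coidempotent module for each submodule of $S^{-1}M$ directly, exploiting the fact that an $S$-Noetherian ring makes every annihilator ideal $S$-finite, so that Lemma \ref{l33.10}(b) becomes available even though $M$ itself need not be $S$-Noetherian. First I would recall that every submodule of $S^{-1}M$ has the form $S^{-1}N$ for some submodule $N$ of $M$, so it suffices to show $S^{-1}N = (0:_{S^{-1}M}Ann^2_{S^{-1}R}(S^{-1}N))$. The inclusion $S^{-1}N \subseteq (0:_{S^{-1}M}Ann^2_{S^{-1}R}(S^{-1}N))$ is immediate, since $Ann_{S^{-1}R}(S^{-1}N)$ annihilates $S^{-1}N$ by definition and hence so does its square; the real work lies in the reverse inclusion.

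For the reverse inclusion I would deliberately avoid trying to establish the equality $Ann^2_{S^{-1}R}(S^{-1}N) = S^{-1}(Ann^2_R(N))$, which in general fails because $N$ need not be $S$-finite. Instead I would use only the one inclusion that always holds: since localization commutes with products of ideals and $S^{-1}(Ann_R(N)) \subseteq Ann_{S^{-1}R}(S^{-1}N)$, we obtain $S^{-1}(Ann^2_R(N)) = (S^{-1}Ann_R(N))^2 \subseteq Ann^2_{S^{-1}R}(S^{-1}N)$. Passing to annihilator submodules reverses the inclusion, giving
\[
(0:_{S^{-1}M}Ann^2_{S^{-1}R}(S^{-1}N)) \subseteq (0:_{S^{-1}M}S^{-1}(Ann^2_R(N))).
\]

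Next, because $R$ is $S$-Noetherian, the ideal $Ann^2_R(N)$ is $S$-finite, so Lemma \ref{l33.10}(b) applies with $I = Ann^2_R(N)$ and yields $(0:_{S^{-1}M}S^{-1}(Ann^2_R(N))) = S^{-1}(0:_MAnn^2_R(N))$. Finally I would invoke the hypothesis that $M$ is fully $S$-coidempotent: there is $s \in S$ with $s(0:_MAnn^2_R(N)) \subseteq N$, and since $s$ becomes a unit after localizing, $S^{-1}(0:_MAnn^2_R(N)) \subseteq S^{-1}N$. Chaining these three steps gives $(0:_{S^{-1}M}Ann^2_{S^{-1}R}(S^{-1}N)) \subseteq S^{-1}N$, which together with the easy inclusion completes the proof.

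I expect the main obstacle to be resisting the temptation to prove the full localization identity for the annihilator, and instead recognizing that only the always-valid inclusion $S^{-1}(Ann^2_R(N)) \subseteq Ann^2_{S^{-1}R}(S^{-1}N)$ is needed, precisely because it points in the direction that \emph{shrinks} the colon submodule. The role of the $S$-Noetherian hypothesis is exactly to supply the $S$-finiteness of $Ann^2_R(N)$ demanded by Lemma \ref{l33.10}(b); without it one would instead have to assume the maximal multiple condition, as in Theorem \ref{p33.10}.
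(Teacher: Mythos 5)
Your proof is correct, and it follows the same overall route the paper intends: the paper's own proof of Proposition \ref{p333.10} is a one-line deferral to ``the technique of Theorem \ref{p33.10} and Lemma \ref{l33.10} (b)''. What you do differently --- and better --- is to notice that a literal transcription of that technique breaks down here. Theorem \ref{p33.10} opens with the full identity $S^{-1}(0:_MAnn^2_R(N))=(0:_{S^{-1}M}Ann^2_{S^{-1}R}(S^{-1}N))$, whose proof needs both the colon identity (available from Lemma \ref{l33.10} (b), because $R$ being $S$-Noetherian makes $Ann^2_R(N)$ an $S$-finite ideal; note that the lemma's proof uses only the $S$-finiteness of the ideal for this half, even though its statement bundles both hypotheses) and the annihilator identity $S^{-1}(Ann_R(N))=Ann_{S^{-1}R}(S^{-1}N)$, which the lemma yields only when $N$ itself is $S$-finite --- and nothing in Proposition \ref{p333.10} forces arbitrary submodules of $M$ to be $S$-finite, since $M$ is not assumed $S$-Noetherian. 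Your replacement of that equality by the always-valid inclusion $S^{-1}(Ann^2_R(N))\subseteq Ann^2_{S^{-1}R}(S^{-1}N)$, which shrinks the colon module in exactly the direction needed, is precisely the repair that makes the argument rigorous; the rest (every submodule of $S^{-1}M$ is of the form $S^{-1}N$, and $s$ becoming a unit turns $s(0:_MAnn^2_R(N))\subseteq N$ into $S^{-1}(0:_MAnn^2_R(N))\subseteq S^{-1}N$) matches the paper's technique. In short: same strategy as the paper, but your version identifies and closes a genuine subtlety that the paper's ``straightforward'' proof glosses over.
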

\begin{proof}
This is straightforward by using the technique of Theorem \ref{p33.10} and Lemma \ref{l33.10} (b).
\end{proof}

\begin{thm} \label{t3.3}
Let $M$ be an $S$-comultiplication $R$-module and $N$ be a submodule of $M$.
Then the following statements are equivalent:
\begin {itemize}
\item [(a)] $N$ is an $S$-copure submodule of $M$;
\item [(b)] $M/N$ is an $S$-comultiplication $R$-module and
$N$ is an $S$-coidempotent submodule of $M$;
\item [(c)] $M/N$ is an $S$-comultiplication $R$-module and there exists an $s \in S$ such that
$s(N:_MAnn_R(K))\subseteq K$, where $K$ is a submodule of $M$ with $N
\subseteq K$;
\item [(d)] $M/N$ is an $S$-comultiplication $R$-module and  there exists an $s \in S$ such that
$s(N:_MAnn_R(K))\subseteq (N:_M(N:_RK))$, where $K$ is a submodule of $M$.
\end {itemize}
\end{thm}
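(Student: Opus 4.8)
The plan is to establish the cycle of implications $(a)\Rightarrow(b)\Rightarrow(c)\Rightarrow(d)\Rightarrow(a)$. Throughout I will lean on three routine facts. First, since $M$ (and, once it is established, $M/N$) is $S$-comultiplication, for every submodule $L$ there is an $s\in S$ with $s(0:_MAnn_R(L))\subseteq L$; this is the convenient working form of the definition. Second, $(0:_{M/N}J)=(N:_MJ)/N$ for every ideal $J$, and $Ann_R(K/N)=(N:_RK)$ when $N\subseteq K$. Third, the ``easy'' containments $K\subseteq(N:_MAnn_R(K))$, $N+K\subseteq(N:_M(N:_RK))$ (because $(N:_RK)K\subseteq N$ and $(N:_RK)N\subseteq N$), and, when $N\subseteq K$, $(N:_MAnn^2_R(K))$ in the sense that $(N:_MAnn_R(K))\subseteq(0:_MAnn^2_R(K))$ (because $Ann_R(K)N=0$). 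In particular the content of $S$-copureness is only the forward inclusion $s(N:_MI)\subseteq N+(0:_MI)$.

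For $(a)\Rightarrow(b)$ I would argue the two halves separately. To see $M/N$ is $S$-comultiplication, fix $K\supseteq N$ and put $J=(N:_RK)$; then $K/N\subseteq(0:_{M/N}J)$ is automatic, while applying $S$-copureness to the ideal $J$ gives $s_1(N:_MJ)\subseteq N+(0:_MJ)$, and since $Ann_R(K)\subseteq J$ one has $(0:_MJ)\subseteq(0:_MAnn_R(K))$, which the $S$-comultiplication of $M$ pushes into $K$; combining yields $s(0:_{M/N}J)\subseteq K/N$. To see $N$ is $S$-coidempotent, apply $S$-copureness to $I=Ann_R(N)$ and use $S$-comultiplication of $M$ to get $s(N:_MAnn_R(N))\subseteq N$, then note $s'(0:_MAnn^2_R(N))\subseteq(N:_MAnn_R(N))$ (again from $S$-comultiplication of $M$), so that a product of the two scalars carries $(0:_MAnn^2_R(N))$ into $N$.

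The implication $(b)\Rightarrow(c)$ is the most delicate of the forward steps. Given $K\supseteq N$ and $m\in(N:_MAnn_R(K))$, the trick is the identity $(N:_RK)Ann_R(N)\subseteq Ann_R(K)$; combined with $Ann_R(K)m\subseteq N$ it yields $(N:_RK)m\subseteq(N:_MAnn_R(N))$, and the $S$-coidempotency of $N$ (which gives $s_0(N:_MAnn_R(N))\subseteq N$ via $(N:_MAnn_R(N))\subseteq(0:_MAnn^2_R(N))$) then places $s_0m$ in $(N:_M(N:_RK))$. Finally the $S$-comultiplication of $M/N$, applied to $K/N$ with $Ann_R(K/N)=(N:_RK)$, sends $(N:_M(N:_RK))$ into $K$, so $ss_0(N:_MAnn_R(K))\subseteq K$. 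The step $(c)\Rightarrow(d)$ is then formal: for an arbitrary submodule $K$ apply $(c)$ to $K'=N+K\supseteq N$, observe $(N:_MAnn_R(K))\subseteq(N:_MAnn_R(K'))$ because $Ann_R(K')\subseteq Ann_R(K)$, and conclude with $N+K\subseteq(N:_M(N:_RK))$.

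The main obstacle is $(d)\Rightarrow(a)$, where I must recover a statement about the ideal $I$ from statements about submodules. The key idea is to feed into $(d)$ the submodule $K=(0:_MI)$ and to sandwich $(N:_MI)$ between the two ends of $(d)$ using two colon identities. On the left, $S$-comultiplication of $M$ gives $s_1(N:_MI)\subseteq(N:_MAnn_R((0:_MI)))$: if $Im\subseteq N$ then $Ann_R(N)m\subseteq(0:_MI)$, so any $r\in Ann_R((0:_MI))$ kills $Ann_R(N)m$, whence $rm\in(0:_MAnn_R(N))$ and $s_1rm\in N$. On the right, $S$-comultiplication of $M/N$ gives $s_3(N:_M(N:_R(0:_MI)))\subseteq N+(0:_MI)$, since $(N:_R(0:_MI))=Ann_R\big((N+(0:_MI))/N\big)$ and this coannihilator is, up to a scalar, the submodule $(N+(0:_MI))/N$ itself. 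Threading these through $(d)$ (with $K=(0:_MI)$, so that $Ann_R(K)=Ann_R((0:_MI))$ and $(N:_RK)=(N:_R(0:_MI))$) produces $s_1s_2s_3(N:_MI)\subseteq N+(0:_MI)$, i.e. $N$ is $S$-copure. I expect the bookkeeping of the scalars together with the verification of these two colon identities, rather than any single conceptual point, to be the part demanding the most care.
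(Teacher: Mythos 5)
Your implication structure is genuinely different from the paper's. The paper proves $(a)\Rightarrow(b)\Rightarrow(c)\Rightarrow(a)$ and then attaches $(d)$ by a separate loop, proving $(b)\Rightarrow(d)$ directly and $(d)\Rightarrow(b)$ by taking $K=N$; you instead close a single cycle $(a)\Rightarrow(b)\Rightarrow(c)\Rightarrow(d)\Rightarrow(a)$. Your $(a)\Rightarrow(b)$ and $(b)\Rightarrow(c)$ are in substance the paper's own arguments (same working form of $S$-comultiplication, same key identity $Ann_R(N)(N:_RK)\subseteq Ann_R(K)$), and the colon-module manipulations throughout are correct, including the two identities underlying your $(d)\Rightarrow(a)$: the inclusion $s_1(N:_MI)\subseteq (N:_MAnn_R((0:_MI)))$ and the equality $(N:_R(0:_MI))=Ann_R\bigl((N+(0:_MI))/N\bigr)$. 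Your $(c)\Rightarrow(d)$ (apply $(c)$ to $N+K$ and finish with $N+K\subseteq(N:_M(N:_RK))$) is shorter than the paper's direct $(b)\Rightarrow(d)$ and is valid.

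The one substantive concern is scalar quantification in your closing step $(d)\Rightarrow(a)$. By the paper's definition, $N$ is $S$-copure only if a \emph{single} $s\in S$ satisfies $s(N:_MI)\subseteq N+(0:_MI)$ for \emph{every} ideal $I$. Your final scalar is $s_1s_2s_3$, where $s_3$ comes from applying $S$-comultiplication of $M/N$ to the submodule $(N+(0:_MI))/N$; hence $s_3$, and so the product, varies with $I$, and strictly speaking you obtain only a pointwise version of copureness. The paper's closing step $(c)\Rightarrow(a)$ is engineered to avoid exactly this: it never invokes comultiplication of $M/N$, but reaches $N+(0:_MI)$ using only the scalar supplied by $(c)$ (applied to the submodule $N+(0:_MI)$) together with one scalar attached to $N$ by the standing hypothesis that $M$ is $S$-comultiplication; read with a scalar uniform in $K$, condition $(c)$ then yields a scalar uniform in $I$, whereas your argument would still need an extra hypothesis (e.g. the maximal multiple condition on $S$) to remove the $I$-dependence of $s_3$. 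In fairness, the paper itself is loose on this point (its $(b)\Rightarrow(c)$ produces a $K$-dependent scalar, and its $(c)\Rightarrow(a)$ also fixes $I$ before choosing $s$), so your proof is at the same level of rigor as the published one; but if you want the cycle to close against the stated definition of $S$-copure, $(d)\Rightarrow(a)$ is where it cannot as written, and the cleaner repair is to adopt the paper's $(c)\Rightarrow(a)$, keep your $(c)\Rightarrow(d)$, and recover $(b)$ from $(d)$ cheaply by taking $K=N$.
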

\begin{proof}
$(a) \Rightarrow (b)$
Assume that $K/N$ is a submodule of $M/N$.
Since $N$ is an $S$-copure submodule of $M$, there exists an $t \in S$ such that $t(N:_MAnn_R(K/N)) \subseteq N+(0:_MAnn_R(K/N))$. Also, as $M$ is an $S$-comultiplication module, there exists an $s \in S$ such that $s(0:_MAnn_R(K)) \subseteq K$. Now we have
\begin{align}
\nonumber
st(0:_{M/N}Ann_R(K/N))&= st((N:_MAnn_R(K/N))/N)
\\ \nonumber
&=s((t(N:_MAnn_R(K/N))+N)/N)
\\ \nonumber
&\subseteq s(N+(0:_MAnn_R(K/N))/N)
\\ \nonumber
&\subseteq(N+s(0:_MAnn_R(K))/N
\\ \nonumber
&\subseteq (N+K)/N=K/N.
\end{align}
Thus $M/N$ is an $S$-comultiplication $R$-module. Now we show that
$N$ is an $S$-coidempotent submodule of $M$. As $M$ is an $S$-comultiplication module, there exists an $u \in S$ such that $u(0:_MAnn_R(N)) \subseteq N$. As $N$ is an $S$-copure submodule of $M$, there exists an $h \in S$ such that $h(N:_MuAnn_R(N)) \subseteq N+(0:_MuAnn_R(N))$. Now we have
\begin{align}
\nonumber
h(0:_MAnn^2_R(N)) &\subseteq h((N:_Mu):_MAnn_R(N))
\\ \nonumber
&=h(N:_MuAnn_R(N))
\\ \nonumber
&\subseteq N+(0:_MuAnn_R(N))
\\ \nonumber
& =((0:_MAn_R(N):_Mu).
\end{align}
This implies that $uh(0:_MAnn^2_R(N))\subseteq (0:_MAn_R(N)$.
Thus $u^2h(0:_MAnn^2_R(N))\subseteq u(0:_MAn_R(N)\subseteq N$, as desired.

$(b) \Rightarrow (c)$
Let $K$ be a submodule of $M$ with $N
\subseteq K$. Since $M/N$ is an $S$-comultiplication $R$-module, there exists an $t \in S$ such that $t(0:_{M/N}Ann_R(K/N))\subseteq K/N$. It follows that $t(N:_MAnn_R(K/N))+N \subseteq K$ and so $t(N:_MAnn_R(K/N)) \subseteq K$. As $N$ is an $S$-coidempotent, there exists an $s \in S$ such that $s(0:_MAnn^2_R(N)) \subseteq N$. Now we have
\begin{align}
\nonumber
(N:_MAnn_R(K))/N &\subseteq (N:_MAnn_R(N)Ann_R(K/N))/N
\\ \nonumber
 &\subseteq ((0:_MAnn^2_R(N)):_MAnn_R(K/N))N
 \\ \nonumber
 &\subseteq ((N:_MsAnn_R(K/N))/N.
\end{align}
It follows that
$s(N:_MAnn_R(K))\subseteq (N:_MAnn_R(K/N))$.
Thus
$$
st(N:_MAnn_R(K))\subseteq t(N:_MAnn_R(K/N))\subseteq K.
$$

$(c) \Rightarrow (a)$
Let $I$ be an ideal of $R$. Since $N
\subseteq (0:_MI)+N$, there exists an $s \in S$ such that $s(N:_MAnn_R((0:_MI)+N)) \subseteq (0:_MI)+N$ by part (c).
As $M$ is an $S$-comultiplication module, there exists an $t \in S$ such that $t(0:_MAnn_R(N)) \subseteq N$.
Now we have
\begin{align}
\nonumber
(N:_MI)\subseteq ((0:_MAnn_R(N)):_MI)&=((0:_MI):_MAnn_R(N))
\\ \nonumber
&\subseteq (N+(0:_MI):_MAnn_R(N))
\\ \nonumber
&\subseteq ((0:_MAnn_R(N+(0:_MI)):_MAnn_R(N))
\\ \nonumber
 &\subseteq (N:_MtAnn_R(N+(0:_MI))).
\end{align}

This implies that $t(N:_MI) \subseteq (N:_MAnn_R(N+(0:_MI)))$. Thus
$$
st(N:_MI) \subseteq s(N:_MAnn_R(N+(0:_MI)))\subseteq N+(0:_MI).
$$

$(b) \Rightarrow (d)$
Let $K$ be a submodule of $M$. As  $N$
is $S$-coidempotent, there exists an $s \in S$ such that $s(0:_MAnn^2_R(N)) \subseteq N$. Now we have
\begin{align}
\nonumber
(N:_MAnn_R(K)) &\subseteq (N:_MAnn_R(N)(N:_RK))
\\ \nonumber
&=((N:_MAnn_R(N)):_M(N:_RK))
\\ \nonumber
&\subseteq ((0:_MAnn^2_R(N)):_M(N:_RK))
\\ \nonumber
&\subseteq (N:_Ms(N:_RK)).
\end{align}
This implies that $s(N:_MAnn_R(K))\subseteq (N:_M(N:_RK))$.

$(d) \Rightarrow (b)$
Take $K=N$.
\end{proof}

\begin{cor} \label{c3.4}
 Let $M$ be an $R$-module. Then we have the following.
\begin{itemize}
\item [(a)] If $M$ is a fully $S$-coidempotent module,
  then $M$ is fully $S$-copure.
\item [(b)] If $M$ is an $S$-comultiplication fully $S$-copure module,
  then $M$ is fully $S$-coidempotent.
    \end{itemize}
  \end{cor}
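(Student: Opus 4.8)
The plan is to deduce both parts directly from Theorem \ref{t3.3}, whose equivalence between $S$-copurity of $N$ and the conjunction ``$M/N$ is $S$-comultiplication and $N$ is $S$-coidempotent'' is exactly the bridge I need. The only prerequisite for invoking that theorem is that the ambient module be $S$-comultiplication, and in each part this will be supplied for free, so I expect no serious computation.

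For part (a), I would first record that a fully $S$-coidempotent module $M$ is $S$-comultiplication by Lemma \ref{l2.5}, so that Theorem \ref{t3.3} applies to every submodule. Fix an arbitrary submodule $N$ of $M$. Then $N$ is $S$-coidempotent by hypothesis, and the quotient $M/N$, being a homomorphic image of $M$, is again fully $S$-coidempotent by Proposition \ref{p2.8}(c), hence $S$-comultiplication by Lemma \ref{l2.5}. Thus condition $(b)$ of Theorem \ref{t3.3} holds for $N$, and the implication $(b)\Rightarrow(a)$ yields that $N$ is $S$-copure. Since $N$ was arbitrary, $M$ is fully $S$-copure.

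For part (b), the argument runs in the opposite direction and is even shorter. Here $M$ is assumed $S$-comultiplication outright, so Theorem \ref{t3.3} is available for every submodule $N$. Because $M$ is fully $S$-copure, each such $N$ is $S$-copure, that is, satisfies condition $(a)$ of Theorem \ref{t3.3}; the implication $(a)\Rightarrow(b)$ then gives in particular that $N$ is $S$-coidempotent. As this holds for every $N$, the module $M$ is fully $S$-coidempotent.

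I do not anticipate a real obstacle, since the heavy lifting is already packaged in Theorem \ref{t3.3}. The one point demanding care is the verification, in part (a), that the hypotheses of Theorem \ref{t3.3} are met for each $N$ --- specifically that $M/N$ is $S$-comultiplication. This is where Proposition \ref{p2.8}(c) and Lemma \ref{l2.5} must be combined: the former transports full $S$-coidempotence to quotients, and the latter converts it into the $S$-comultiplication property needed to apply the theorem to $N$. Attempting part (a) without this observation would force one to reprove the relevant inclusion for $M/N$ directly, which is exactly the content already subsumed in Theorem \ref{t3.3}.
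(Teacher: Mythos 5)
Your proposal is correct and follows essentially the same route as the paper: part (a) combines Proposition \ref{p2.8}(c) and Lemma \ref{l2.5} to make $M$ and every quotient $M/N$ an $S$-comultiplication module, then applies Theorem \ref{t3.3} $(b)\Rightarrow(a)$, while part (b) is Theorem \ref{t3.3} $(a)\Rightarrow(b)$ applied to each submodule. The only difference is that you spell out the verification of the hypotheses more explicitly than the paper's terse proof, which is fine.
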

\begin{proof}
(a) By  Proposition \ref{p2.8},  every homomorphic image of
$M$ is a fully $S$-coidempotent module and so every homomorphic image of
$M$ is an $S$-comultiplication module by Lemma \ref{l2.5}.
Now the result follows from Theorem \ref{t3.3} $(b) \Rightarrow (a)$.

(b) This follows from Theorem \ref{t3.3} $(a) \Rightarrow (b)$.
\end{proof}

The following example shows that in part (b) of
the Corollary \ref{c3.4}, the condition $M$ is an $S$-comultiplication $R$-module can not be omitted.
\begin{ex}\label{e3.2}
Set $M=\Bbb Z_p\oplus \Bbb Z_p$ for some prime number $p$. Take the multiplicatively closed subset $S= \Bbb Z \setminus p\Bbb Z$. Then $M$ as a $\Bbb Z$-module
is a fully $S$-copure module, while $M$ is not a
fully $S$-coidempotent module because the submodule $0\oplus \Bbb Z_p$ of $M$ is not $S$-coidempotent.
\end{ex}

\begin{prop}\label{p3.10}
Let $S$ satisfying the maximal multiple condition and $M$ be a fully $S$-coidempotent $R$-module. Then for each submodule $K$ of $M$ and each collection $\{N_\lambda
\}_{\lambda \in \Lambda}$ of submodules of $M$,  there exists an $s \in S$ such that $
s(\bigcap _{\lambda \in \Lambda}(N_\lambda +K)) \subseteq  \bigcap_{\lambda
\in \Lambda} N_\lambda +K.$
 \end{prop}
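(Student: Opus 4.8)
The plan is to establish only the inclusion asserted, namely
$$
s\Bigl(\bigcap_{\lambda\in\Lambda}(N_\lambda+K)\Bigr)\subseteq \bigcap_{\lambda\in\Lambda}N_\lambda+K ,
$$
and nothing more (the reverse containment is not even claimed). Write $X=\bigcap_{\lambda\in\Lambda}N_\lambda$ and $P=\bigcap_{\lambda\in\Lambda}(N_\lambda+K)$. The engine of the argument is the equivalence $(a)\Leftrightarrow(c)$ of Theorem \ref{t2.11}: since $M$ is fully $S$-coidempotent, applying $(c)$ to the \emph{single} pair of submodules $X$ and $K$ produces an $s\in S$ with
$$
s\bigl(0:_M Ann_R(X)Ann_R(K)\bigr)\subseteq X+K=\bigcap_{\lambda\in\Lambda}N_\lambda+K .
$$
Hence it suffices to prove the purely module-theoretic containment $P\subseteq\bigl(0:_M Ann_R(X)Ann_R(K)\bigr)$, after which the desired inclusion follows simply by multiplying through by $s$.

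The heart of the matter is therefore the claim that $Ann_R(X)\,Ann_R(K)$ annihilates $P$. Since $Ann_R(X)Ann_R(K)$ is generated by products $ab$ with $a\in Ann_R(X)$ and $b\in Ann_R(K)$, and $P$ is a submodule, I would verify $abp=0$ for each such $a,b$ and each $p\in P$. Fix $\lambda$; because $p\in N_\lambda+K$, write $p=n_\lambda+k_\lambda$ with $n_\lambda\in N_\lambda$ and $k_\lambda\in K$. Then $bp=bn_\lambda+bk_\lambda=bn_\lambda\in N_\lambda$, using $b\in Ann_R(K)$ to kill $bk_\lambda$ together with the fact that $N_\lambda$ is a submodule. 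As $\lambda$ was arbitrary, this shows $bp\in\bigcap_{\lambda\in\Lambda}N_\lambda=X$, whence $a(bp)=0$ since $a\in Ann_R(X)$. Running over generators gives $Ann_R(X)Ann_R(K)\cdot p=0$, i.e. $p\in\bigl(0:_M Ann_R(X)Ann_R(K)\bigr)$, as wanted. Combining the two steps then yields $sP\subseteq s\bigl(0:_M Ann_R(X)Ann_R(K)\bigr)\subseteq X+K$, which is exactly the assertion.

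The crux, and the only step requiring care, is the element chase in the middle paragraph: one must multiply by $b\in Ann_R(K)$ \emph{first}, which simultaneously collapses $p$ into every $N_\lambda$ and hence places the intermediate element $bp$ inside the intersection $X$, and only \emph{afterwards} apply $a\in Ann_R(X)$. The point to watch is that the decomposition $p=n_\lambda+k_\lambda$ depends on $\lambda$, so the argument cannot act with $a$ before passing through $bp$. I do not anticipate any genuine obstacle beyond this bookkeeping: once the correct pair $(X,K)$ is fed into Theorem \ref{t2.11}$(c)$, the proof is short. Notably, the maximal multiple condition is needed here only as the standing hypothesis of Theorem \ref{t2.11}, not to manufacture a single uniform $s$ across the family $\{N_\lambda\}_{\lambda\in\Lambda}$, since $(c)$ is invoked for just one pair of submodules.
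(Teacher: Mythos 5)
Your proof is correct, but it follows a genuinely different route from the paper's. You apply Theorem \ref{t2.11} $(a)\Rightarrow(c)$ once, to the single pair $X=\bigcap_{\lambda}N_\lambda$ and $K$, and then close the gap with an element chase: for $p\in\bigcap_\lambda(N_\lambda+K)$, $b\in Ann_R(K)$, $a\in Ann_R(X)$, multiplying by $b$ first kills the ($\lambda$-dependent) $K$-components, so $bp\in N_\lambda$ for every $\lambda$, hence $bp\in X$ and $abp=0$; this gives $\bigcap_\lambda(N_\lambda+K)\subseteq(0:_M Ann_R(X)Ann_R(K))$, and one multiplication by the $s$ from $(c)$ finishes. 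The paper instead uses Lemma \ref{l2.5} to get that $M$ is $S$-comultiplication, the maximal multiple condition to extract a single $s$ with $s(0:_M Ann_R(N_\lambda))\subseteq N_\lambda$ uniformly over the whole (possibly infinite) family, and Corollary \ref{c3.4}(a) (fully $S$-copure) in a long chain of colon-submodule inclusions accumulating a factor $s^3$. Your route buys two things: first, you only need the implication $(a)\Rightarrow(c)$, whose proof in the paper never actually invokes the maximal multiple condition, so your argument in fact establishes the proposition for an arbitrary multiplicatively closed $S$; second, your chain terminates exactly at $\bigcap_\lambda N_\lambda+K$, whereas the paper's displayed chain, as printed, ends back at $\bigcap_\lambda(N_\lambda+K)$, which is not the asserted conclusion. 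What the paper's approach buys is the uniform control of $s$ across the family and the explicit link to $S$-copurity, but that uniformity is precisely what forces its reliance on the maximal multiple condition.
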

\begin{proof}
Let $K$ be a submodule of $M$ and $\{N_\lambda
\}_{\lambda \in \Lambda}$ be a collection of submodules of $M$.
By  Lemma \ref{l2.5}, $M$ is an $S$-comultiplication $R$-module. So, there exists an $s \in S$ such that $s(0:_MAnn_R(N_\lambda) \subseteq N_\lambda$ for each $\lambda \in \Lambda$ since $S$ satisfying the maximal multiple condition. Now by using the fact that $M$ is fully $S$-copure by Corollary \ref{c3.4} (a), we have
\begin{align}
\nonumber
s^3(\bigcap _{\lambda \in \Lambda}(N_\lambda +K)) &\subseteq
\bigcap _{\lambda \in \Lambda}s^3(N_\lambda +K)
\\
\nonumber
&\subseteq
\bigcap _{\lambda \in \Lambda}s^3((0:_MAnn_R(N_\lambda))+(0:_MAnn_R(K))
\\
\nonumber
&\subseteq
\bigcap _{\lambda \in \Lambda}s^3((0:_MAnn_R(N_\lambda)Ann_R(K))
\subseteq
\bigcap _{\lambda \in \Lambda}s^3(N_\lambda:_MsAnn_R(K))
\\
\nonumber
&\subseteq \bigcap _{\lambda \in \Lambda}s^2(N_\lambda+(0:_MsAnn_R(K)) \subseteq  \bigcap _{\lambda \in \Lambda}s^2(N_\lambda:_MAnn_R(K))
\\
\nonumber
&\subseteq \bigcap _{\lambda \in \Lambda}s(N_\lambda+(0:_MAnn_R(K)) \subseteq
\bigcap _{\lambda \in \Lambda}(sN_\lambda+s(0:_MAnn_R(K))
\\
\nonumber
&\subseteq
\bigcap _{\lambda \in \Lambda}(N_\lambda+K).
\end{align}
\end{proof}

\begin{prop} \label{p3.6}
Let $M$ be an $S$-comultiplication $R$-module and $N$ be an $S$-pure submodule of $M$. Then $N$ is an $S$-coidempotent submodule of $M$.
\end{prop}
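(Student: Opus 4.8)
The plan is to produce directly an element $s' \in S$ with $s'(0:_M Ann^2_R(N)) \subseteq N$, since that is exactly the defining condition for $N$ to be $S$-coidempotent. Throughout I would abbreviate $A = Ann_R(N)$, so that $Ann^2_R(N) = A^2$ and the goal becomes finding $s' \in S$ with $s'(0:_M A^2) \subseteq N$.

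First I would unpack the $S$-comultiplication hypothesis applied to the submodule $N$ itself: there exist $s_1 \in S$ and an ideal $I$ of $R$ with $s_1(0:_M I) \subseteq N \subseteq (0:_M I)$. The inclusion $N \subseteq (0:_M I)$ forces $IN = 0$, hence $I \subseteq A$, and therefore $(0:_M A) \subseteq (0:_M I)$. Feeding this back gives the clean consequence $s_1(0:_M A) \subseteq s_1(0:_M I) \subseteq N$, an estimate I expect to use twice.

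Next comes a purely formal containment: $A(0:_M A^2) \subseteq (0:_M A)$, because for $x \in (0:_M A^2)$ and $a,b \in A$ one has $b(ax) = (ba)x \in A^2 x = 0$. Multiplying by $s_1$ and invoking the previous step yields $s_1 A(0:_M A^2) \subseteq s_1(0:_M A) \subseteq N$; on the other hand $s_1 A(0:_M A^2) \subseteq AM$ trivially, so together $s_1 A(0:_M A^2) \subseteq N \cap AM$. The crucial move is then to apply the $S$-purity of $N$ with the particular ideal $I = A$: this produces $t \in S$ with $t(N \cap AM) \subseteq AN = 0$, the last equality holding precisely because $A = Ann_R(N)$. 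Consequently $ts_1 A(0:_M A^2) = 0$, that is $A\bigl(ts_1(0:_M A^2)\bigr) = 0$, which says $ts_1(0:_M A^2) \subseteq (0:_M A)$. Applying $s_1(0:_M A) \subseteq N$ one final time gives $s_1^2 t\,(0:_M A^2) \subseteq N$, so $s' = s_1^2 t$ is the desired witness.

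I do not expect a genuinely hard obstacle here; the conceptual heart is simply the observation that inserting $I = Ann_R(N)$ into the purity condition collapses $N \cap AM$, since $Ann_R(N)\cdot N = 0$. The only thing demanding care is the bookkeeping of the several elements of $S$ and making sure the $S$-comultiplication property is invoked for the correct submodule (namely $N$, in order to control $(0:_M A)$) rather than for an arbitrary one.
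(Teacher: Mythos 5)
Your proof is correct and follows essentially the same route as the paper: both arguments feed the ideal $Ann_R(N)$ into the $S$-purity condition to annihilate $N \cap Ann_R(N)M$, and both use the $S$-comultiplication consequence $s_1(0:_M Ann_R(N)) \subseteq N$ twice, arriving at the same witness (comultiplication element)$^2 \times$ (purity element), merely written with element-product containments instead of the paper's colon-submodule chain.
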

\begin{proof}
As $N$ is an $S$-pure submodule of $M$, there exists an $s \in S$ such that
$$
s(N\cap Ann_R(N)M) \subseteq Ann_R(N)N=0.
$$
Hence,  $N\cap Ann_R(N)M)\subseteq (0:_Ms)$.
Since $M$ is an $S$-comultiplication module, there exists an $t \in S$ such that $t(0:_MAnn_R(N)) \subseteq N$. Hence we have
\begin{align}
\nonumber
(0:_MAnn^2_R(N))&=((0:_MAnn_R(N)):_MAnn_R(N))
\\ \nonumber
&\subseteq ((N:_Mt):_MAnn_R(N))
\\ \nonumber
&=(N:_MtAnn_R(N))
\\ \nonumber
&=(N \cap Ann_R(N)M:_MtAnn_R(N))
\\
\nonumber
&\subseteq ((0:_Ms):_MtAnn_R(N))
\\ \nonumber
&=((0:_MAnn_R(N)):_Mst) \subseteq (N:_Mst^2).
\end{align}
Therefore, $st^2(0:_MAnn^2_R(N)) \subseteq N$.
\end{proof}

An $R$-module $M$ is said to be an \textit{$S$-multiplication module} if for each submodule $N$ of $M$, there exist $s \in S$ and an ideal $I$ of $R$ such that $sN \subseteq IM \subseteq N$ \cite{ATUS2020}.
\begin{cor} \label{t3.7}
 Let $M$ be an $R$-module. Then we have the
following.
\begin{itemize}
 \item [(a)] If $M$ is an $S$-multiplication fully $S$-copure
  module, then $M$ is fully $S$-pure.
  \item [(b)] If $M$ is an $S$-comultiplication
  fully $S$-pure module, then $M$ is fully $S$-copure.
  \item [(c)] If $M$ is an $S$-multiplication fully $S$-coidempotent
  module, then $M$ is fully $S$-idempotent.
  \item [(d)] If $M$ is  an $S$-comultiplication
  fully $S$-idempotent module, then $M$ is fully $S$-coidempotent.
\end{itemize}
\end{cor}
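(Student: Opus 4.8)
The plan is to read the four assertions as two dual pairs and to obtain each by a two-step chain built from ``bridge'' results that pass between the pure/idempotent circle of ideas (controlled by $S$-multiplication) and the copure/coidempotent circle (controlled by $S$-comultiplication). On the comultiplication side both bridges are already in hand: Corollary \ref{c3.4}(a) gives ``fully $S$-coidempotent $\Rightarrow$ fully $S$-copure'', and Proposition \ref{p3.6} gives ``$S$-comultiplication and $S$-pure $\Rightarrow$ $S$-coidempotent'', hence ``$S$-comultiplication and fully $S$-pure $\Rightarrow$ fully $S$-coidempotent''. On the multiplication side I will use the exact mirror statements: (D1) every fully $S$-idempotent module is fully $S$-pure, and (D2) every $S$-multiplication, fully $S$-copure module is fully $S$-idempotent. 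Here (D1) is the $S$-multiplication analogue of Corollary \ref{c3.4}(a) and (D2) is the $S$-multiplication analogue of Proposition \ref{p3.6}; both belong to the companion theory of fully $S$-pure and fully $S$-idempotent modules \cite{FF2022, FF2024}.

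Granting the four bridges, part (b) is entirely internal: if $M$ is $S$-comultiplication and fully $S$-pure, then every submodule is $S$-pure, so Proposition \ref{p3.6} makes every submodule $S$-coidempotent, whence $M$ is fully $S$-coidempotent and Corollary \ref{c3.4}(a) yields that $M$ is fully $S$-copure. Part (d) is then reduced to (b): apply (D1) to the fully $S$-idempotent module $M$ to get that $M$ is fully $S$-pure; since $M$ is still $S$-comultiplication, the first step of the argument of (b) now produces a fully $S$-coidempotent module.

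The remaining two parts are the multiplication-side mirrors. For (c), Corollary \ref{c3.4}(a) converts the fully $S$-coidempotent hypothesis into ``$M$ is fully $S$-copure''; as $M$ is $S$-multiplication, (D2) then delivers fully $S$-idempotent. For (a), (D2) applied to the $S$-multiplication, fully $S$-copure module $M$ gives fully $S$-idempotent, and (D1) upgrades this to fully $S$-pure. Thus each part is just a composition of two bridges, and $S$-(co)multiplication of $M$ --- being hypothesised outright and never disturbed --- is available at whichever step it is needed.

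The main obstacle is therefore not the chaining but the two multiplication-side bridges (D1) and (D2), which I must either cite or prove. They are not formal consequences of their comultiplication counterparts, because the multiplication/comultiplication interchange is a heuristic rather than a genuine duality; each demands its own manipulation of colon submodules and annihilators. Concretely, (D2) should follow a computation parallel to that of Proposition \ref{p3.6} with $Ann_R(N)$ replaced throughout by $(N:_RM)$ and every annihilator submodule $(0:_MI)$ replaced by $IM$, while (D1) should follow the homomorphic-image pattern of Corollary \ref{c3.4}(a) using the $S$-multiplication analogues of Proposition \ref{p2.8} and Lemma \ref{l2.5}. Once (D1) and (D2) are secured, the four implications follow immediately by the chains above.
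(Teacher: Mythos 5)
Your proposal is correct and matches the paper's proof essentially step for step: the paper obtains each part by chaining exactly your bridges, with your (D1) and (D2) being precisely \cite[Corollary 2.22]{FF2024} and \cite[Proposition 2.23]{FF2024}, which the paper simply cites from the companion work on fully $S$-idempotent modules rather than reproving (so your worry about having to establish them is moot). The only cosmetic difference is in part (d), where you go directly from fully $S$-pure to fully $S$-coidempotent via Proposition \ref{p3.6}, while the paper routes through part (b) and Corollary \ref{c3.4}(b); both chains are valid.
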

\begin{proof}
(a) By \cite[Proposition 2.23]{FF2024}, every submodule of $M$ is $S$-idempotent.
Hence the result follows from \cite[Corollary 2.22]{FF2024}.

(b) By Proposition \ref{p3.6}, every submodule of $M$ is $S$-coidempotent.
Hence the result follows from Corollary \ref{c3.4} (a).

(c) This follows from Corollary \ref{c3.4} (a) and \cite[Proposition 2.23]{FF2024}.

(d) By \cite[Corollary 2.22]{FF2024}, $M$ is fully $S$-pure. Thus by part (b), $M$ is fully $S$-copure.
So the result follows from Corollary \ref{c3.4} (b).
\end{proof}

\begin{lem}\label{l22.5}
Let $M$ be an $S$-comultiplication $R$-module. If $I$ and $J$ are ideals of $R$ such that $(0:_MI) \subseteq (0:_MJ)$. Then there exists an $s \in S$ such that $sJM \subseteq IM$.
\end{lem}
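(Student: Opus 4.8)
The plan is to apply the $S$-comultiplication hypothesis to the single submodule $IM$, and then reduce the desired inclusion $sJM\subseteq IM$ to a purely annihilator-theoretic statement that follows directly from $(0:_MI)\subseteq (0:_MJ)$.

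First I would invoke the definition of $S$-comultiplication for the submodule $IM$: there exist $s\in S$ and an ideal $K$ of $R$ with $s(0:_MK)\subseteq IM\subseteq (0:_MK)$. The right-hand inclusion $IM\subseteq (0:_MK)$ says precisely that $K\cdot IM=0$, i.e. $K\subseteq Ann_R(IM)$. With this particular $s$ fixed, the target $sJM\subseteq IM$ will follow as soon as I establish $JM\subseteq (0:_MK)$, since then $sJM\subseteq s(0:_MK)\subseteq IM$.

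The heart of the argument is the inclusion $Ann_R(IM)\subseteq Ann_R(JM)$, which is exactly where the hypothesis is used. Taking $r\in Ann_R(IM)$ and using commutativity of $R$, I would write $r\cdot IM=I\cdot(rM)=0$, so that $rM\subseteq (0:_MI)$; the hypothesis $(0:_MI)\subseteq (0:_MJ)$ then gives $rM\subseteq (0:_MJ)$, whence $J\cdot(rM)=r\cdot JM=0$, i.e. $r\in Ann_R(JM)$. Consequently $K\subseteq Ann_R(IM)\subseteq Ann_R(JM)$, so $K\cdot JM=0$, that is $JM\subseteq (0:_MK)$, which completes the proof.

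I expect the only genuine subtlety to be the choice of which submodule to feed into the $S$-comultiplication property: applying it to $IM$ (rather than to $JM$ or to $(0:_MI)$) is what makes the reduction go through, and the elementary identity $r\cdot IM=I\cdot(rM)$ relies essentially on $R$ being commutative. Once the annihilator inclusion $Ann_R(IM)\subseteq Ann_R(JM)$ is in place, everything else is a routine chase.
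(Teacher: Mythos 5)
Your proof is correct, and it shares the same pivot as the paper's: the annihilator inclusion $Ann_R(IM)\subseteq Ann_R(JM)$, which you establish elementwise and the paper establishes via the colon identity $Ann_R(IM)=((0:_MI):_RM)\subseteq ((0:_MJ):_RM)=Ann_R(JM)$ --- the same computation in two dialects. Where you genuinely diverge is in the second half: the paper at that point simply cites Lemma 1 of the Y{\i}ld{\i}z--Tekir--Ko\c{c} paper on $S$-comultiplication modules (which says precisely that an inclusion of annihilators of submodules reverses, up to a factor $s\in S$, to an inclusion of the submodules), whereas you re-derive that fact from scratch by applying the definition of $S$-comultiplication to the single submodule $IM$, extracting $s\in S$ and an ideal $K$ with $s(0:_MK)\subseteq IM\subseteq (0:_MK)$, and then chasing $K\subseteq Ann_R(IM)\subseteq Ann_R(JM)$ to get $JM\subseteq (0:_MK)$ and hence $sJM\subseteq s(0:_MK)\subseteq IM$. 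In effect you have inlined a proof of the cited lemma in the special case needed. The paper's version is shorter and delegates the work to a known result; yours is self-contained, makes visible exactly where the $S$-comultiplication hypothesis enters (it is applied to $IM$, not to $JM$ or to $(0:_MI)$), and would survive even if the reader has no access to the cited preprint. Both are valid; yours is arguably the better expository choice for a lemma whose proof the paper itself says is recorded ``for the sake of references.''
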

 \begin{proof}
As $(0:_MI) \subseteq (0:_MJ)$, we have
$$
Ann_R(IM)=((0:_MI):_RM) \subseteq ((0:_MJ):_RM)=Ann_R(JM).
$$
Now by \cite[Lemma 1]{EUS2020}, there exists an $s \in S$ such that $sJM \subseteq IM$.
\end{proof}

In the following theorem we see that if $M$ is an $S$-finite $R$-module, then  $M$ is a fully $S$-idempotent module iff $M$ is a fully $S$-coidempotent module.
\begin{thm}\label{t2.5}
Let $M$ be an $R$-module. Then we have the following.
\begin{itemize}
\item [(a)] If $M$ is an $S$-finite fully $S$-coidempotent $R$-module,  then
  $M$ is a fully $S$-idempotent module.
\item [(b)] If $M$ is an $S$-Noetherian fully $S$-idempotent module,
then $M$ is a fully $S$-coidempotent module.
\end{itemize}
\end{thm}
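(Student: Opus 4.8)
The plan is to reduce both statements to Corollary~\ref{t3.7}, whose two relevant clauses do exactly the required work once the correct ``companion'' structure is in place: part (c) turns an $S$-multiplication fully $S$-coidempotent module into a fully $S$-idempotent one, and part (d) turns an $S$-comultiplication fully $S$-idempotent module into a fully $S$-coidempotent one. So for (a) it suffices to upgrade the hypotheses to ``$M$ is $S$-multiplication,'' and for (b) to ``$M$ is $S$-comultiplication.'' Two of the needed implications are immediate and use no finiteness: by Lemma~\ref{l2.5} a fully $S$-coidempotent module is already $S$-comultiplication, and conversely if $M$ is fully $S$-idempotent then for each submodule $N$ there is $s\in S$ with $sN\subseteq (N:_RM)^2M\subseteq (N:_RM)M\subseteq N$, so $M$ is $S$-multiplication. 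The genuine content of the theorem is therefore the passage from $S$-comultiplication to $S$-multiplication under $S$-finiteness (for (a)), and from $S$-multiplication to $S$-comultiplication under $S$-Noetherianity together with full $S$-idempotency (for (b)).

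For (a), $M$ is $S$-comultiplication by Lemma~\ref{l2.5}, so I would fix a submodule $N$, set $I=(N:_RM)$, and aim to produce $s\in S$ with $sN\subseteq IM$ (the inclusion $IM\subseteq N$ being automatic). The idea is to use the $S$-comultiplication structure to replace $N$, up to an element of $S$, by the coannihilator $(0:_MAnn_R(N))$, and then to convert this coannihilator into an ideal multiple of $M$. This conversion is where the $S$-finiteness of $M$ enters: writing $s_0M\subseteq K\subseteq M$ with $K$ finitely generated, I can compare annihilators and feed the comparison into Lemma~\ref{l22.5}, which turns an inclusion of coannihilators into an inclusion of the shape $sJM\subseteq IM$, i.e.\ exactly the $S$-multiplication form. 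This is the $S$-analogue of the classical fact that a finitely generated comultiplication module is a multiplication module. With $M$ now $S$-multiplication and still fully $S$-coidempotent, Corollary~\ref{t3.7}(c) finishes (a).

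For (b), $M$ is $S$-multiplication by the remark above, and I would prove directly that $M$ is $S$-comultiplication. Fix $N\le M$ and set $L=(0:_MAnn_R(N))$; one checks first that $Ann_R(L)=Ann_R(N)$ (using $N\subseteq L$ and $Ann_R(N)\cdot L=0$), so the target $sL\subseteq N$ says that $N$ agrees with its double coannihilator up to $S$. Here I would use full $S$-idempotency in its strong form: for every submodule $K$ the ideal $(K:_RM)$ acts idempotently on $M$ up to $S$, since $sK\subseteq (K:_RM)^2M$, and because $M$ is $S$-Noetherian each submodule $(K:_RM)M$ is $S$-finite, so a Nakayama-type argument should make each $(K:_RM)M$ an $S$-direct summand. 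In effect $M$ becomes $S$-semisimple and $S$-multiplication, a structure for which the simple pieces are separated and which I expect to force the $S$-comultiplication identity $sL\subseteq N$ (the relevant regularity/semisimplicity bookkeeping can be imported from \cite{FF2024}). With $M$ shown to be $S$-comultiplication and still fully $S$-idempotent, Corollary~\ref{t3.7}(d) finishes (b).

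The main obstacle in both parts is precisely this interchange between the ``image'' description $IM$ of submodules ($S$-multiplication) and the ``coannihilator'' description $(0:_MJ)$ ($S$-comultiplication); the delicate point is that the correspondence $N\leftrightarrow Ann_R(N)\leftrightarrow (N:_RM)$ is exact only up to multiplication by elements of $S$, so one must carry a single $s\in S$ valid for the given submodule rather than a varying family, and then consolidate the several such $s$'s into one. In (b) this is genuinely harder than in (a): the Nakayama/$S$-direct-summand step must be applied to the arbitrary submodules $(K:_RM)M$, which is exactly why $S$-Noetherianity (every submodule $S$-finite) is needed there, whereas in (a) the single hypothesis that $M$ itself is $S$-finite suffices. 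I would expect this consolidation of the $S$-factors, and the $S$-Nakayama step in (b), to be the most technical part of the write-up.
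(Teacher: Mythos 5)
Your overall framework---reduce (a) to Corollary \ref{t3.7}(c) by proving $M$ is $S$-multiplication, and (b) to Corollary \ref{t3.7}(d) by proving $M$ is $S$-comultiplication---is legitimate in principle, and the two easy implications you record (fully $S$-coidempotent $\Rightarrow$ $S$-comultiplication, fully $S$-idempotent $\Rightarrow$ $S$-multiplication) are correct. But the hard intermediate step in (a) rests on a false statement. You claim, as a ``classical fact,'' that a finitely generated comultiplication module is a multiplication module, and propose to prove its $S$-analogue from $S$-finiteness plus $S$-comultiplication alone. This fails already for $S=\{1\}$: take $R=k[x,y]/(x,y)^2$ and $M=E_R(k)$ the injective hull of the residue field $k=R/\mathfrak{m}$. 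Then $M$ has length $3$, every submodule of $M$ has the form $(0:_MI)$ (Matlis duality), so $M$ is a finitely generated comultiplication module; yet for $N=(0:_Mx)$, which has length $2$, one computes $(N:_RM)=\mathfrak{m}$ and $(N:_RM)M=\mathfrak{m}M=\mathrm{Soc}(M)$, of length $1$, so $N\neq (N:_RM)M$ and $M$ is not multiplication. Consequently no amount of ``comparing annihilators and feeding into Lemma \ref{l22.5}'' can yield $S$-multiplication from $S$-comultiplication and $S$-finiteness: one must use the full $S$-coidempotency, and crucially of the quotient $M/N$, not just of $M$. That is exactly what the paper's proof of (a) does: it shows $s(0:_{M/N}Ann_R(N))=0$, invokes Proposition \ref{p2.8}(c) and Lemma \ref{l2.5} to see $M/N$ is $S$-comultiplication, applies Lemma \ref{l22.5} inside $M/N$ to get $ts(M/N)\subseteq Ann_R(N)(M/N)$, and then uses the $S$-Nakayama lemma \cite[Lemma 2.1]{Ha20} on the $S$-finite module $M/N$ to produce $h\in S$ and $a\in Ann_R(N)$ with $h+a\in(N:_RM)$; squaring gives $h^2\in Ann_R(N)+(N:_RM)^2$, whence $h^2N\subseteq(N:_RM)^2M\subseteq N$. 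This Nakayama-plus-squaring device is entirely absent from your outline of (a).

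For (b) your skeleton is sound but the content is missing: the two steps you defer to ``bookkeeping''---that an $S$-Noetherian fully $S$-idempotent module is $S$-semisimple, and that $S$-semisimple together with $S$-multiplication forces $S$-comultiplication---are proved nowhere (Theorem \ref{t2.7}(b) goes in the opposite direction), and the second one classically rests on a multiplicity-free isotypic decomposition whose $S$-version, with a single $s\in S$ to be extracted for each submodule, is at least as delicate as the theorem itself. The paper's argument is far more direct and needs none of this: since $M$ is $S$-Noetherian, $N$ is $S$-finite, and $sN\subseteq(N:_RM)^2M\subseteq(N:_RM)N$, so \cite[Lemma 2.1]{Ha20} yields $h\in S$ and $a\in(N:_RM)$ with $(h+a)N=0$, i.e.\ $h+a\in Ann_R(N)$; then $h^2\in(N:_RM)+Ann^2_R(N)$, and therefore $h^2(0:_MAnn^2_R(N))\subseteq(N:_RM)(0:_MAnn^2_R(N))\subseteq(N:_RM)M\subseteq N$, which is the required $S$-coidempotency---no appeal to Corollary \ref{t3.7}(d) is needed. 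In short, both parts turn on the same $S$-Nakayama-plus-squaring trick; your proposal replaces it in (a) by a false classical fact and in (b) by an unproven structure theory, so as written it does not constitute a proof.
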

 \begin{proof}
(a) Let $N$ be a submodule of $M$. Since $M$ is a fully $S$-coidempotent module, there exists an $s \in S$ such that
$s(0:_MAnn^2_R(N)) \subseteq N$. Hence
\begin{align}
\nonumber
s(0:_{M/N}Ann_R(N))&=s((N:_MAnn_R(N))/N)
\\ \nonumber
&\subseteq s((0:_MAnn_R(N)):_MAnn_R(N))/N)
\\ \nonumber
&=s((0:_MAnn^2_R(N))/N)
\\ \nonumber
&=(s(0:_MAnn^2_R(N))+N)/N=0.
\end{align}
This implies that $(0:_{M/N}Ann_R(N)) \subseteq (0:_{M/N}s)$.
By Proposition \ref{p2.8} (c), $M/N$ is an $S$-coidempotent $R$-module. Thus $M/N$ is an $S$-comultiplication module by Lemma \ref{l2.5}. Hence by Lemma \ref{l22.5}, there exists $t \in S$ such that $ts(M/N) \subseteq Ann_R(N)(M/N)$. As $M$ is an $S$-finite $R$-module, one can see that $M/N$ is an $S$-finite $R$-module. Therefore, there exist $h \in S$ and $a \in Ann_R(N)$ such that $(h+a)(M/N)=0$ by \cite[Lemma 2.1]{Ha20}. So, $h+a \in (N:_RM)$. Hence,  $(h+a)^2 \in (N:_RM)^2$. This implies that $Rh^2 \subseteq Ann_R(N)+(N:_RM)^2$. Thus we have
\begin{align}
\nonumber
h^2N= Rh^2N&\subseteq Ann_R(N)N +(N:_RM)^2N
\\ \nonumber
&\subseteq (N:_RM)^2N
\\ \nonumber
&\subseteq (N:_RM)^2M.
\end{align}
Therefore,  $M$ is fully $S$-idempotent.

(b) Let $N$ be a submodule of $M$. Since $M$ is fully $S$-idempotent, there exists an $s \in S$ such that
$sN\subseteq (N:_RM)^2M\subseteq (N:_RM)N$.  Since $M$ is $S$-Noetherian, $N$ is $S$-finite.
Now by \cite[Lemma 2.1]{Ha20}, there exist $h \in S$ and $a \in (N:_RM)$ such that $(h+a)N=0$. So, $h+a \in Ann_R(N)$. Hence, $(h+a)^2 \in Ann^2_R(N)$. This yields that $Rh^2 \subseteq (N:_RM)+Ann^2_R(N)$. Thus
\begin{align}
\nonumber
h^2(0:_MAnn^2_R(N))&= Rh^2(0:_MAnn^2_R(N))
\\ \nonumber
&\subseteq (N:_RM)(0:_MAnn^2_R(N))+Ann^2_R(N)(0:_MAnn^2_R(N))
\\ \nonumber
&=(N:_RM)(0:_MAnn^2_R(N))
\\ \nonumber
&\subseteq (N:_RM)M \subseteq N,
\end{align}
as needed.
\end{proof}

\end{document}